\DeclareMathOperator{\Exp}{Exp}
\DeclareMathOperator{\grad}{grad}
\DeclareMathOperator{\sgn}{sgn}
\DeclareMathOperator{\image}{Im}
\def\D{\mathrm{D}}
\def\fmincon{\mathrm{fmincon}}
\def\id{\mathrm{id}}
\def\Lasso{\mathrm{Lasso}}
\def\nonreg{\mathrm{nonreg}}
\def\proj{\mathrm{proj}}
\def\proposed{\mathrm{proposed}}
\def\orth{\mathrm{orth}}
\def\unconst{\mathrm{unconst}}
\newcommand{\argmin}{\mathop{\rm arg~min}\limits}
\newtheorem{proposition}{Proposition}[section]
\newtheorem{theorem}{Theorem}[section]
\theoremstyle{remark}
\newtheorem{remark}{Remark}[section]
\crefname{proposition}{Proposition}{Propositions}
\crefname{theorem}{Theorem}{Theorems}
\crefname{corollary}{Corollary}{Corollaries}
\crefname{lemma}{Lemma}{Lemmas}
\crefname{definition}{Definition}{Definitions}
\crefname{remark}{Remark}{Remarks}
\crefname{assumption}{Assumption}{Assumptions}
\crefname{example}{Example}{Examples}
\crefname{problem}{Problem}{Problems}
\crefname{remark}{Remark}{Remarks}
\crefname{section}{Section}{Sections}
\crefname{subsection}{Section}{Sections}
\crefname{subsubsection}{Section}{Sections}
\crefname{figure}{Figure}{Figures}
\title{Riemannian optimization on unit sphere with $p$-norm\\ and its applications\footnotetext{\textbf{Funding:} This work was funded by JSPS KAKENHI Grant number JP20K14359.}}
\author{Hiroyuki Sato\thanks{Department of Applied Mathematics and Physics, Kyoto University, Kyoto, Japan\newline ({\tt hsato@i.kyoto-u.ac.jp}).}}
\begin{document}
\maketitle

\begin{abstract}
This paper deals with Riemannian optimization on the unit sphere in terms of $p$-norm with general $p > 1$. As a Riemannian submanifold of the Euclidean space, the geometry of the sphere with~$p$-norm is investigated, and several geometric tools used for Riemannian optimization, such as retractions and vector transports, are proposed and analyzed. Applications to Riemannian optimization on the sphere with nonnegative constraints and $L_p$-regularization-related optimization are also discussed. As practical examples, the former includes nonnegative principal component analysis and the latter is closely related to the Lasso regression and box-constrained problems. Numerical experiments verify that Riemannian optimization on the sphere with $p$-norm has substantial potential for such applications, and the proposed framework provides a theoretical basis for such optimization.
\end{abstract}

\noindent {\bf Keywords:}
$p$-norm, Sphere, Riemannian optimization, Nonnegative PCA, Lasso regression, Box-constrained optimization

\section{Introduction}
\label{sec1}
In the Euclidean space $\mathbb{R}^n$, the $p$-norm of a vector $a \in \mathbb{R}^n$ whose $i$th element is $a_i \in \mathbb{R}$ is defined by
\begin{equation}
     \|a\|_p := \sqrt[p]{\sum_{i=1}^n \lvert a_i\rvert^p},
\end{equation}
where $p \ge 1$ is a real value.
When $p = \infty$, the $\infty$-norm, or maximum norm, is defined by
\begin{equation}
     \|a\|_\infty := \max\left\{\lvert a_1\rvert, \lvert a_2\rvert, \dots, \lvert a_n\rvert\right\}.
\end{equation}
In optimization and related fields, discussions are usually based on the $2$-norm.
The $1$-norm is also important in, e.g., Lasso regression for sparse estimation~\cite{hastie2015statistical}.
Furthermore, for $x \in \mathbb{R}^n$, the constraint $\|x\|_\infty \le c$ for some $c \geq 0$ is equivalent to the box constraint $-c \le x_i \le c$ for all elements $x_i$ of $x$.

For $p \ge 1 $ or $p = \infty$, we define the unit sphere with $p$-norm in $\mathbb{R}^n$ as
\begin{equation}
    S^{n-1}_p := \{x \in \mathbb{R}^n \mid \|x\|_p = 1\}.
\end{equation}
A particularly important and well-studied example is the case of $p = 2$, which reduces to the standard (hyper)sphere $S^{n-1}_2 = \{x \in \mathbb{R}^n \mid \|x\|_2 = 1\}$ in the sense of the Euclidean norm.
In terms of optimization, as we discuss in Section~\ref{sec:application}, the case of $p = 2p'$ can be used to implicitly impose the nonnegativity constraints on $x \in S^{n-1}_{p'}$.
A practical example of this is the case of~$p = 4$ and $p' = 2$, which leads to a constrained optimization on the standard unit sphere $S^2_2$ with the constraint $x \ge 0$.
Furthermore, the case of $p = 1$ is closely related to $L_1$ regularization in, e.g., Lasso~\cite{hastie2015statistical}, and the case of $p=\infty$ is closely related to the box constraint.

In this paper, we address the geometry of $S^{n-1}_p$ with $p \in (0, \infty)$ and provide several mathematical tools required for Riemannian optimization, i.e., optimization on Riemannian manifolds, such as retractions and vector transports~\cite{AbsMahSep2008,sato2021riemannian}.
A natural and practical retraction is defined through normalization in terms of $p$-norm, and we provide mathematical support for the validity of this retraction.
Furthermore, we discuss projective and orthographic retractions on~$S^{n-1}_p$.
Although it may be harder to use such retractions practically than the retraction based on normalization, their inverses are efficient and easy to implement.
Thus, discussing them is meaningful.
We also provide an explicit expression for the vector transport defined as the differentiated retraction associated with the retraction by normalization.
Other contributions of this paper include applications of the sphere $S^{n-1}_p$ to practical optimization problems related to, e.g., the nonnegative principal component analysis (PCA) and Lasso regression.

This paper is organized as follows.
In Section~\ref{sec:preliminary}, we introduce the notations used.
We also review the differentiability and derivative of the $p$-norm, which are used throughout this paper.
In Section~\ref{sec:geometry}, we prove that $S^{n-1}_p$ is a Riemannian submanifold of $\mathbb{R}^n$ and, as such, investigate its geometry.
Section~\ref{sec:retraction} provides a retraction on $S^{n-1}_p$ based on normalization and its inverse.
The respective formulas for the inverses of projective and orthographic retractions are also provided.
In Section~\ref{sec:VT}, we discuss a vector transport on $S^{n-1}_p$ derived by differentiating a retraction.
We also remark another vector transport based on the orthogonal projection.
Section~\ref{sec:summary} is a reference to the geometric results in this paper.
We present two types of applications of Riemannian optimization on $S^{n-1}_p$ in Section~\ref{sec:application}.
One is the application to Riemannian optimization problems on the sphere with the nonnegative constraint, which include nonnegative PCA as an important example.
The other is the application to $L_p$-regularization-related optimization problems, which include the Lasso regression and box-constrained problems.
Section~\ref{sec:conclusion} concludes the paper.

\section{Preliminaries}
\label{sec:preliminary}
In this section, we provide preliminaries for the discussion in the later sections.

\subsection{Notation}
Throughout the paper, we use the following notation.
The vector space of $n$-dimensional real column vectors is denoted by $\mathbb{R}^n$.
We use the notation $\cdot^T$ to indicate transposition.
The~$n$-dimensional real vector whose $i$th element is~$a_i \in \mathbb{R}$ is denoted by $(a_i) \in \mathbb{R}^n$, and we denote the $i$th element of $b \in \mathbb{R}^n$ by $b_i$ or $(b)_i$.
For $a = (a_i) \in \mathbb{R}^n$, we denote the element-wise power of $r \in \mathbb{R}$ by~\mbox{$a^r := (a_i^r) \in \mathbb{R}^n$} and the element-wise absolute value by~\mbox{$\lvert a\rvert := (\lvert a_i\rvert) \in \mathbb{R}^n$}.
Furthermore, the binary relation $\le$ (resp. $\ge$) for vectors $a = (a_i), b = (b_i) \in \mathbb{R}^n$ means the element-wise relation $\le$ (resp. $\ge$), i.e., $a \le b$ (resp. $a \ge b$) is equivalent to $a_i \le b_i$ (resp.~$a_i \ge b_i$) for $i = 1, 2, \dots, n$.
In particular, $a \ge 0$ means that all elements of $a$ are nonnegative.
We define the all-one vector as~$\bm{1} := (1, 1, \dots, 1)^T \in \mathbb{R}^n$. Then, the condition $\|x\|_p = 1$ is equivalent to~$\|x\|_p^p = 1$ and rewritten as $\bm{1}^T \lvert x\rvert^p = 1$.
The identity matrix of $n$th order is denoted by $I$.
For $\bm{1} \in \mathbb{R}^n$ and $I \in \mathbb{R}^{n \times n}$, the size $n$ is determined by context.

We denote the sign function by $\sgn$, i.e.,
\begin{equation}
    \sgn(w) := \begin{cases}1 \quad \text{if $w > 0$},\\ 0  \quad \text{if $w = 0$},\\ -1 \quad \text{if $w < 0$}\end{cases}
\end{equation}
for $w \in \mathbb{R}$.
Note that $\sgn(w)\lvert w\rvert = w$ always holds.
We also use the same notation for the element-wise application of $\sgn$, i.e., for $a = (a_i) \in \mathbb{R}^n$, we define $\sgn(a) := (\sgn(a_i)) \in \mathbb{R}^n$.

The operator $\odot$ denotes the Hadamard product, which is the element-wise product, i.e., for~$a = (a_i), b = (b_i) \in \mathbb{R}^n$, we define $a \odot b := (a_i b_i) \in \mathbb{R}^n$.
We consider the Hadamard product only for vectors in this paper.
It is clear that the commutative law~\mbox{$a \odot b = b \odot a$} holds.
Furthermore, for $c = (c_i) \in \mathbb{R}^n$, we have $a^T (b \odot c) = (a \odot b)^T c$ because both sides are equal to~$\sum_{i=1}^n a_ib_ic_i$.
Using these facts, we can rewrite the condition $\|x\|_p^p = 1$ as~\mbox{$x^T (\sgn(x) \odot \lvert x \rvert^{p-1}) = 1$} because we have
\begin{equation}
    x^T (\sgn(x) \odot \lvert x\rvert^{p-1}) = (\sgn(x) \odot x)^T \lvert x\rvert^{p-1} = \lvert x\rvert^T \lvert x \rvert^{p-1} = \bm{1}^T \lvert x\vert^p = \|x\|_p^p.
\end{equation}

Although $\mathbb{R}^n$ can be equipped with the $p$-norm to be a normed vector space, no inner product is associated with the $p$-norm unless $n = 1$ or $p = 2$.
Therefore, we equip $\mathbb{R}^n$ with the standard inner product $\langle a, b\rangle := a^T b$ and the induced norm $\|a\| := \sqrt{\langle a, a\rangle} = \|a\|_2$, which coincides with the $2$-norm, even when we discuss the sphere $S^{n-1}_p$ for general $p$.
As discussed in Section~\ref{sec:geometry}, we regard $\mathbb{R}^n$ as a Riemannian manifold with the Riemannian metric induced by the standard inner product and consider $S^{n-1}_p$ for $p \in (1, \infty)$ as a Riemannian submanifold of~$\mathbb{R}^n$.

For a manifold $\mathcal{M}$, we denote the tangent space of $\mathcal{M}$ at $x \in \mathcal{M}$ by~$T_x \mathcal{M}$.
Furthermore, when the manifold $\mathcal{M}$ is a Riemannian manifold with a Riemannian metric $\langle \cdot, \cdot\rangle$, each tangent space $T_x \mathcal{M}$ is endowed with the inner product $\langle \cdot, \cdot \rangle_x$ via the Riemannian metric~$\langle \cdot, \cdot\rangle$, and the Riemannian gradient $\grad f(x)$ of a $C^1$ function $f \colon \mathcal{M} \to \mathbb{R}$ at $x$ is defined as the unique tangent vector at $x$ satisfying $\D f(x)[\xi] = \langle \grad f(x), \xi\rangle_x$ for all $\xi \in T_x \mathcal{M}$, where~$\D f(x) \colon T_x \mathcal{M} \to T_{f(x)}\mathbb{R} \simeq \mathbb{R}$ is the derivative of $f$ at $x \in \mathcal{M}$.
For~$\mathbb{R}^n$ as a Riemannian manifold with the Riemannian metric $\langle \xi, \eta\rangle_x := \xi^T \eta$ for any $x \in \mathbb{R}^n$ and $\xi,\, \eta \in T_x \mathbb{R}^n \simeq \mathbb{R}^n$, the Riemannian gradient of a function $\bar{f} \colon \mathbb{R}^n \to \mathbb{R}$ coincides with the standard Euclidean gradient $\nabla \bar{f}$, i.e.,~\mbox{$\nabla \bar{f}(x) := (\partial \bar{f}(x)/\partial x_i) \in T_x \mathbb{R}^n \simeq \mathbb{R}^n$} for $x \in \mathbb{R}^n$.

\subsection{Derivatives of $p$-norm functions}
Here, we investigate the derivative or Euclidean gradient of the $p$-norm-related functions in $\mathbb{R}^n$.
First, although the $p$-norm is defined for any $p \in [1, \infty]$, it is of class $C^1$ only for $p \in (1, \infty)$.
In the remainder of this section, we assume~$p \in (1, \infty)$.
Then, it is easy to verify that
\begin{equation}
    \frac{d \lvert w\rvert^p}{dw} = p \sgn(w) \lvert w \rvert^{p-1}
\end{equation}
for $w \in \mathbb{R}$.
Regarding the $p$-norm of $x \in \mathbb{R}^n$, because $\|x\|_p^p = \bm{1}^T \lvert x \rvert^p$, its partial derivative with respect to the variable $x_i$ for $i \in \{1, 2, \dots, n\}$ is
\begin{equation}
    \frac{\partial \|x\|_p^p}{\partial x_i} = \frac{\partial \lvert x_i\rvert^p}{\partial x_i} = p\sgn(x_i)\lvert x_i\rvert^{p-1}.
\end{equation}
Therefore, the gradient of the function $x \to \|x\|_p^p$ is equal to
\begin{equation}
\label{eq:grad_normpp}
    \nabla (x\mapsto \|x\|_p^p)(x) = (p \sgn(x_i) \lvert x_i \rvert^{p-1}) = p \sgn(x) \odot \lvert x\rvert^{p-1}.
\end{equation}

In the subsequent sections, we exploit the fact that the conditions $\|x\|_p = 1$ and~\mbox{$\|x\|_p^p = 1$}---both of which characterize the unit sphere $S^{n-1}_p$---are equivalent to each other.
Furthermore,~$\|x\|_p^p$ usually seems to be easier to handle than $\|x\|_p$.
For example, the gradient of the~$p$-norm function is computed as
\begin{align}
    \nabla (x \mapsto \|x\|_p)(x)
    &=\nabla \Big(x\mapsto \big(\|x\|_p^p\big)^{\frac{1}{p}}\Big)(x)\nonumber\\
    &=\frac{1}{p}(\|x\|_p^{p})^{\frac{1}{p}-1} \cdot p \sgn(x) \odot \lvert x\rvert^{p-1}\nonumber\\
    &=\frac{\sgn(x) \odot \lvert x\rvert^{p-1}}{\|x\|_p^{p-1}}.\label{eq:grad_norm}
\end{align}
We prefer to use~\eqref{eq:grad_normpp}, which provides a simpler expression, rather than~\eqref{eq:grad_norm}, unless~\eqref{eq:grad_norm} is essential in the discussion.

Note that $h(x) := \|x\|_p^p$ is not necessarily a $C^\infty$ function in $\mathbb{R}^n$.
For example, consider the case $p = 3$ and $n = 2$, where~\mbox{$h(x) = \lvert x_1\rvert^3 + \lvert x_2\rvert^3$}.
Then, we have~\mbox{$\nabla h(x) = 3\begin{pmatrix} \lvert x_1\rvert x_1 \\ \lvert x_2 \rvert x_2
\end{pmatrix}$} and~$\nabla^2 h(x) = 6\begin{pmatrix}\lvert x_1 \rvert & 0 \\ 0 & \lvert x_2 \rvert\end{pmatrix}$.
Hence, $h$ is of class~$C^2$ in $\mathbb{R}^2$.
However, since~\mbox{$\partial^2 h(x)/\partial x_1^2 = 6\lvert x_1\rvert$} (resp.~\mbox{$\partial^2 h(x)/\partial x_2^2 = 6\lvert x_2\rvert$}) is not partially differentiable with respect to $x_1$ (resp. $x_2$) at any~$(0, x_2)^T \in \mathbb{R}^2$ (resp. $(x_1, 0)^T$), $h$ is not of class $C^3$ in $\mathbb{R}^2$.
This causes nonsmoothness of $S^{2}_3$, which includes the points $(\pm 1, 0)^T$ and $(0, \pm 1)^T$, as a submanifold of $\mathbb{R}^2$.
In the next section, we will prove that $S^{n-1}_p$ with $p \in (1, \infty)$ is still at least a $C^1$ submanifold of $\mathbb{R}^n$ (Theorem~\ref{thm:submanifold}).

\section{Geometry of $S^{n-1}_p$ and tools for Riemannian optimization}
\label{sec:geometry}
In this section, we discuss the geometry of the unit sphere with $p$-norm, i.e.,
\begin{equation}
    S^{n-1}_p = \{x \in \mathbb{R}^n \mid \|x\|_p = 1\},
\end{equation}
where $1 < p < \infty$.
We use the following equivalent conditions interchangeably:
\begin{equation}
\| x\|_p = 1 \iff \| x\|_p^p = 1 \iff \bm{1}^T \lvert x\rvert^p = 1 \iff x^T (\sgn(x) \odot \lvert x\rvert^{p-1}) = 1.
\end{equation}

As expected, many properties of the Euclidean sphere $S^{n-1}_2$ analogically hold for $S^{n-1}_p$ with any $p \in (1,\infty)$, especially even integer $p$, while some do not hold for $S^{n-1}_1$ or $S^{n-1}_\infty$.

\subsection{$S^{n-1}_p$ as a Riemannian submanifold of $\mathbb{R}^n$}
First, we prove that $S^{n-1}_p$ is an embedded submanifold of $\mathbb{R}^n$.

\begin{theorem}
\label{thm:submanifold}
For $p \in (1, \infty)$, the unit sphere $S^{n-1}_p$ with $p$-norm is an (n-1)-dimensional $C^r$ embedded submanifold of $\mathbb{R}^n$, where $r = \infty$ if $p$ is an even integer,~\mbox{$r = p - 1$} if $p$ is an odd integer, and $r = \lfloor p \rfloor$, which is the largest integer less than $p$, if $p$ is not an integer.\footnote{The statement can be rewritten as follows: for any positive integer $k$, $S^{n-1}_p$ is a $C^{2k-1}$ submanifold of $\mathbb{R}^n$ if~\mbox{$2k-1 < n < 2k$}, $C^\infty$ submanifold if $n = 2k$, and $C^{2k}$ submanifold if~\mbox{$2k < n \le 2k+1$}.}
\end{theorem}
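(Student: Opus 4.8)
The plan is to realize $S^{n-1}_p$ as a regular level set and apply the $C^r$ regular value theorem (equivalently, the implicit function theorem for $C^r$ maps, which holds for every $1\le r\le\infty$). Concretely, set $h\colon\mathbb{R}^n\to\mathbb{R}$, $h(x):=\|x\|_p^p=\bm{1}^T\lvert x\rvert^p$; since $\|x\|_p=1\iff\|x\|_p^p=1$, we have $S^{n-1}_p=h^{-1}(1)$, and this set is nonempty (it contains the standard basis vectors) and closed in $\mathbb{R}^n$ by continuity of $h$. It then suffices to establish two facts: (i) $h$ is of class $C^r$ on $\mathbb{R}^n$ for the value of $r$ claimed in the statement, and (ii) every point of $S^{n-1}_p$ is a regular point of $h$. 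Granting these, the regular value theorem yields directly that $S^{n-1}_p$ is an $(n-1)$-dimensional $C^r$ embedded submanifold of $\mathbb{R}^n$.

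Fact (ii) is immediate from \eqref{eq:grad_normpp}: for $x\in S^{n-1}_p$ we have $\nabla h(x)=p\,\sgn(x)\odot\lvert x\rvert^{p-1}$, and the condition $\|x\|_p=1$ forces $x\ne 0$, so some coordinate $x_j$ is nonzero and the $j$th entry $p\,\sgn(x_j)\lvert x_j\rvert^{p-1}$ of $\nabla h(x)$ is nonzero (here it is essential that $p-1>0$). Hence $\D h(x)\colon\mathbb{R}^n\to\mathbb{R}$ is nonzero, i.e.\ surjective, at every point of $S^{n-1}_p$, so $1$ is a regular value of $h$.

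Fact (i) is the substantive part. Since $h(x)=\sum_{i=1}^n\lvert x_i\rvert^p$ is a finite sum of functions each depending on a single coordinate, $h$ is of class $C^r$ on $\mathbb{R}^n$ as soon as the one-variable function $\varphi_p(w):=\lvert w\rvert^p$ is of class $C^r$ on $\mathbb{R}$. Away from the origin $\varphi_p$ is $C^\infty$, with
\[
\varphi_p^{(k)}(w)=\Bigl(\prod_{j=0}^{k-1}(p-j)\Bigr)\,\sgn(w)^{k}\,\lvert w\rvert^{\,p-k}\qquad(w\ne 0),
\]
so only the behaviour at $w=0$ is in question. One checks by induction on $k$ that, while $p-k>0$, the function $\varphi_p^{(k)}$ extends continuously across $0$ (with value $0$) and is indeed the $k$th derivative of $\varphi_p$ there, whereas at the first index $k$ with $p-k\le 0$ the candidate difference quotient $\varphi_p^{(k-1)}(w)/w$ behaves like a nonzero constant times $\sgn(w)^{k}\lvert w\rvert^{\,p-k}$ and hence has no limit as $w\to 0$ — it tends to $\pm$ a nonzero constant when $p=k$, and is unbounded when $0<p-k<1$. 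This produces exactly the three cases: if $p$ is an even integer, $\varphi_p(w)=w^p$ is a polynomial, so $r=\infty$; if $p$ is an odd integer, $\varphi_p\in C^{p-1}$ while $\varphi_p^{(p-1)}(w)=\mathrm{const}\cdot\lvert w\rvert$ is not differentiable at $0$, so $r=p-1$; and if $p$ is not an integer, $\varphi_p\in C^{\lfloor p\rfloor}$ while $\varphi_p^{(\lfloor p\rfloor)}$ is not differentiable at $0$, so $r=\lfloor p\rfloor$. Combining (i) and (ii) with the regular value theorem finishes the proof.

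The hard part will be this last piece of bookkeeping at the origin in Fact (i): carefully tracking how the limits defining $\varphi_p^{(k)}(0)$ behave, and in particular confirming that the exponent $r$ is sharp, i.e.\ that $\varphi_p$ — and hence $h$ and the submanifold $S^{n-1}_p$ — is genuinely no smoother than claimed. This is precisely the phenomenon already visible in the $p=3$, $n=2$ example of Section~\ref{sec:preliminary}, where $h$ is $C^2$ but not $C^3$, causing the nonsmoothness of $S^2_3$ at the points $(\pm1,0)^T$ and $(0,\pm1)^T$. Everything else — the gradient computation, the closedness and nonemptiness of $S^{n-1}_p$, and the reductions from the regularity of $\varphi_p$ to that of $h$ and then to the submanifold structure via the regular value theorem — is routine.
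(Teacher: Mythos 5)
Your proposal is correct and follows essentially the same route as the paper: realize $S^{n-1}_p$ as the level set $h^{-1}(1)$ of $h(x)=\|x\|_p^p$, verify that $h$ is $C^r$ by the case analysis on the derivatives of $w\mapsto\lvert w\rvert^p$ at the origin, check that the gradient is nonzero on the level set, and invoke the regular value theorem. Your additional sharpness discussion (that $h$ is not $C^{r+1}$) is a fine elaboration but is not needed for the statement as written, and note the small slip there: at the first failing index the exponent satisfies $-1<p-k<0$, not $0<p-k<1$.
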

\begin{proof}
We define $h \colon \mathbb{R}^n \to \mathbb{R}$ as $h(x) := \|x\|_p^p$.
We can observe that $h$ is a $C^r$ function in $\mathbb{R}^n$, where $r$ is the integer in the statement of the theorem, as follows:
If $p$ is an even integer,~\mbox{$h(x) = \sum_{i=1}^n \lvert x_i\rvert^p = \sum_{i=1}^n x_i^p$} is clearly a $C^\infty$ function.
If $p$ is an odd integer,~\mbox{$h(x) = \sum_{i=1}^n \lvert x_i\rvert^p$} is of class $C^{p-1}$ because $\partial^{p-1} h(x) / \partial x_i^{p-1} = (p!) \lvert x_i\rvert$ is continuous for any~$i \in \{1, 2, \dots, n\}$.
Similarly, if $p$ is not an integer, $h$ is of class $C^{\lfloor p \rfloor}$ because we have
\begin{align}
\frac{\partial^{\lfloor p \rfloor} h}{\partial x_i^{\lfloor p \rfloor}}(x) =& p(p-1) \cdots (p-\lfloor p \rfloor +1) \sgn(x)^{\lfloor p \rfloor} \lvert x_i\rvert^{p - \lfloor p \rfloor}\nonumber\\
=& \frac{\Gamma(p+1)}{\Gamma(p+1-\lfloor p \rfloor)}\sgn(x)^{\lfloor p \rfloor} \lvert x_i \rvert^{p-\lfloor p \rfloor},
\end{align}
which is continuous because $p-\lfloor p \rfloor > 0$ in this case, where $\Gamma(\cdot)$ is the gamma function.
Therefore, $h$ is of class $C^r$ in every case.

Using the formula~\eqref{eq:grad_normpp}, the Jacobian matrix of $h$ at $x \in \mathbb{R}^n - \{0\}$, which is defined as~$(Jh)_x := (\partial h(x) / \partial x_i)^T \in \mathbb{R}^{1 \times n}$, is computed as
\begin{equation}
\label{eq:jacobi}
    (Jh)_x = \nabla h(x)^T = p (\sgn(x) \odot \lvert x\rvert^{p-1})^T.
\end{equation}
For any $x \in \mathbb{R}^n$ satisfying $h(x) = 1$, we have $(Jh)_x \neq 0$ because such $x$ is not $0$.
This implies that $1$ is a regular value of $h$.
Therefore, it follows from the regular level set theorem~\cite[Theorem 9.9]{tu2010introduction} that~$h^{-1}(\{1\}) = S^{n-1}_p$ is a $C^r$ embedded submanifold of $\mathbb{R}^n$, whose dimension is~\mbox{$n - \dim \mathbb{R} = n - 1$}.
This completes the proof.
\end{proof}

\begin{remark}
Note that the integer $r$ in Theorem~\ref{thm:submanifold} is not less than $1$ in every case.
Therefore,~$S^{n-1}_p$ with $p \in (1, \infty)$ is always a $C^1$ submanifold of $\mathbb{R}^n$.
In contrast, if~\mbox{$p = 1$} or $p = \infty$, the unit sphere $S^{n-1}_p$ is not a $C^1$ embedded submanifold of $\mathbb{R}^n$ because of their corners.
Indeed, the above proof fails if $p = 1$ or $p = \infty$ because~\mbox{$x \mapsto \|x\|_p$} is not a $C^1$ function in such cases.
\end{remark}
In what follows, we assume $p \in (1, \infty)$ and define smoothness regarding~$S^{n-1}_p$ as $C^r$ with~$r \ge 1$ in Theorem~\ref{thm:submanifold}.
For example, we say that a function~$f$ on $S^{n-1}_p$ is smooth if $f$ is of class $C^r$.

We endow the sphere $S^{n-1}_p$ with the Riemannian metric as
\begin{equation}
    \langle \xi, \eta\rangle_x := \xi^T \eta, \qquad \xi,\, \eta \in T_x S^{n-1}_p, \quad x \in S^{n-1}_p,
\end{equation}
which is induced from the Riemannian metric (the standard inner product)
\begin{equation}
    \langle a, b\rangle_x := a^T b, \qquad a,\, b \in T_x \mathbb{R}^n \simeq \mathbb{R}^n, \quad x \in \mathbb{R}^n
\end{equation}
in the ambient space $\mathbb{R}^n$.
Thus, $S^{n-1}_p$ is a Riemannian submanifold of $\mathbb{R}^n$.

\subsection{Tangent space, normal space, and orthogonal projection}
Defining $h(x) := \|x\|_p^p$, the tangent space $T_x S^{n-1}_p$ of $S^{n-1}_p = h^{-1}(\{1\})$ at $x$ is equal to the kernel of the linear map $\D h(x) \colon \mathbb{R}^n \simeq T_x \mathbb{R}^n \to T_{h(x)}\mathbb{R} \simeq \mathbb{R}$, i.e.,~$(\D h(x))^{-1}(\{0\})$.
Here, it follows from~\eqref{eq:jacobi} that the derivative $\D h(x)$ acts on $y \in \mathbb{R}^n$ as
\begin{equation}
    \D h(x)[y] = (Jh)_x(y) = p (\sgn(x) \odot \lvert x\rvert^{p-1})^T y.
\end{equation}
Therefore, we have
\begin{equation}
\label{eq:tangent}
    T_x S^{n-1}_p = (\D h(x))^{-1}(\{0\}) = \{\xi \in \mathbb{R}^n \mid \xi^T (\sgn(x) \odot \lvert x\rvert^{p-1}) = 0\}.
\end{equation}
Since $S^{n-1}_p$ is a Riemannian submanifold of $\mathbb{R}^n$, we can define the normal space~$N_x S^{n-1}_p$ of~$S^{n-1}_p$ at a point $x$ as the orthogonal complement of~\mbox{$T_x S^{n-1}_p \subset T_x \mathbb{R}^n \simeq \mathbb{R}^n$} in $\mathbb{R}^n$ with respect to the Riemannian metric in $\mathbb{R}^n$, i.e., the standard inner product.
From the expression~\eqref{eq:tangent}, we can observe that~$T_x S^{n-1}_p$ is a hyperplane orthogonal to the vector $\sgn(x) \odot \lvert x\rvert^{p-1} \in \mathbb{R}^n$.
Hence, we have
\begin{equation}
    N_x S^{n-1}_p := (T_x S^{n-1}_p)^\perp 
    = \{\alpha \sgn(x) \odot \lvert x\rvert^{p-1} \mid \alpha \in \mathbb{R}\}.\label{eq:normal}
\end{equation}

For minimizing a smooth function $f \colon S^{n-1}_p \to \mathbb{R}$ on $S^{n-1}_p$, the Riemannian gradient of $f$ is important.
Here, the Riemannian gradient $\grad f(x)$ of $f$ at~\mbox{$x \in S^{n-1}_p$} can be obtained by orthogonally projecting $\nabla \bar{f}(x) \in \mathbb{R}^n$ onto the tangent space $T_x S^{n-1}_p$ at $x$, where $\bar{f}$ is a smooth extension of $f$ to the ambient space $\mathbb{R}^n$ and $\nabla \bar{f}(x) := (\partial \bar{f}(x) / \partial x_i) \in \mathbb{R}^n$ is the Euclidean gradient.
That is, we have
\begin{equation}
    \grad f(x) = P_x(\nabla \bar{f}(x)),
\end{equation}
where $P_x$ is the orthogonal projection to the tangent space $T_x S^{n-1}_p$ at $x$.
The projection $P_x \colon \mathbb{R}^n \to T_x S^{n-1}_p$ acts on any $d \in \mathbb{R}^n$ so that $d - P_x(d) \in N_ x S^{n-1}_p$ holds.
From~\eqref{eq:normal}, the normal vector~$d - P_x(d)$ is written as $\alpha \sgn(x) \odot \lvert x\rvert^{p-1}$ for some $\alpha \in \mathbb{R}$.
Thus, we obtain the decomposition of $d$ as
\begin{equation}
\label{eq:d}
    d = P_x(d) + \alpha \sgn(x) \odot \lvert x\rvert^{p-1}.
\end{equation}
By noting the expression~\eqref{eq:tangent} and multiplying~\eqref{eq:d} by $(\sgn(x) \odot \lvert x\rvert^{p-1})^T$ from the left, we obtain $\alpha = ((\sgn(x) \odot \lvert x\rvert^{p-1})^T d) / \|\lvert x\rvert^{p-1}\|_2^2$, where we used the relation
\begin{equation}
    (\sgn(x) \odot \lvert x\rvert^{p-1})^T(\sgn(x) \odot \lvert x\rvert^{p-1})
    = ((\sgn(x))^2)^T(\lvert x\rvert^{p-1})^2
    = \| \lvert x\rvert^{p-1}\|_2^2 \neq 0.
\end{equation}
Substituting the expression of $\alpha$ to~\eqref{eq:d}, we obtain
\begin{align}
    P_x(d) &= d - \frac{(\sgn(x) \odot \lvert x\rvert^{p-1})^T d}{\|\lvert x\rvert^{p-1}\|_2^2}\sgn(x) \odot \lvert x\rvert^{p-1}\nonumber\\
    &= \left(I - \frac{(\sgn(x) \odot \lvert x\rvert^{p-1})(\sgn(x) \odot \lvert x\rvert^{p-1})^T}{\|\lvert x\rvert^{p-1}\|_2^2}\right)d.
\end{align}
In other words, the linear map $P_x$ is represented as the matrix
\begin{equation}
\label{eq:proj}
    P_x = I - \frac{(\sgn(x) \odot \lvert x\rvert^{p-1})(\sgn(x) \odot \lvert x\rvert^{p-1})^T}{\|\lvert x\rvert^{p-1}\|_2^2}.
\end{equation}

\section{Retractions and their inverses}
\label{sec:retraction}
In an iterative Riemannian optimization algorithm on a Riemannian manifold~$\mathcal{M}$, to compute the next point from the current point~\mbox{$x \in \mathcal{M}$} and search direction $\eta \in T_x \mathcal{M}$, a retraction on $\mathcal{M}$ is important~\mbox{\cite{AbsMahSep2008,adler2002newton,shub1986some}}.
A map~\mbox{$R \colon T\mathcal{M} \to \mathcal{M}$} is said to be a retraction on $\mathcal{M}$ if the restriction~\mbox{$R_x := R\vert_{T_x \mathcal{M}}$} of $R$ to $T_x \mathcal{M}$ for~$x \in \mathcal{M}$ satisfies $R_x(0_x) = x$ and~\mbox{$\D R_x(0_x) = \id_{T_x \mathcal{M}}$}, where $0_x$ is the zero vector in $T_x \mathcal{M}$ and $\id_{T_x \mathcal{M}}$ is the identity map in $T_x \mathcal{M}$.
Although retractions are usually discussed on $C^\infty$ manifolds, the manifold $S^{n-1}_p$ is a $C^r$ submanifold of $\mathbb{R}^n$, where~$r$ is in Theorem~\ref{thm:submanifold} and may not be $\infty$.
Therefore, we define a retraction on $S^{n-1}_p$ as a $C^r$, which we say smooth, map on $S^{n-1}_p$ satisfying the above properties.

Furthermore, the inverse of a retraction can be used in, e.g., the Riemannian conjugate gradient method~\cite{zhu2020riemannian}.
In the following, we discuss three types of retractions on $S^{n-1}_p$ and their respective inverses.

\subsection{Retraction by normalization and its inverse}
\label{subsec:retr_normalization}
Intuitively, for any $x \in S^{n-1}_p$ and $\eta \in T_x S^{n-1}_p$, $x + \eta \in \mathbb{R}^n$ appears to be outside $S^{n-1}_p$ unless~$\eta = 0$.
This is actually true from the following proposition.
However, its proof for general $p > 1$ is not as easy as in the case of $p = 2$.

\begin{proposition}
\label{prop:norm}
Assume that $p \in (1, \infty)$.
For any $x \in S^{n-1}_p$ and $\eta \in T_x S^{n-1}_p$, if~\mbox{$\eta \neq 0$}, then~$\|x + \eta\|_p > 1$ holds.
\end{proposition}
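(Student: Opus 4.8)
The plan is to exploit the strict convexity of the function $h(x) := \|x\|_p^p = \sum_{i=1}^n \lvert x_i\rvert^p$ on $\mathbb{R}^n$, together with the observation that the tangent-space condition says precisely that $\nabla h(x)$ is orthogonal to $\eta$. Recall from~\eqref{eq:grad_normpp} that $h$ is of class $C^1$ on all of $\mathbb{R}^n$ with $\nabla h(x) = p\,\sgn(x) \odot \lvert x\rvert^{p-1}$, so no smoothness issue arises even along lines through the origin.

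First I would show that $h$ is \emph{strictly} convex on $\mathbb{R}^n$. For $p > 1$ the scalar function $w \mapsto \lvert w\rvert^p$ is strictly convex on $\mathbb{R}$, since its derivative $w \mapsto p\,\sgn(w)\lvert w\rvert^{p-1}$ is strictly increasing (because $\lvert w\rvert^{p-1}$ is strictly increasing in $\lvert w\rvert$ and the sign factor accounts for the rest), and a finite sum of strictly convex functions of separate coordinates is strictly convex on the product space. Then, since $x \in S^{n-1}_p$ gives $h(x) = 1$ and the tangent-space formula~\eqref{eq:tangent} gives $\eta^T(\sgn(x) \odot \lvert x\rvert^{p-1}) = 0$, i.e.\ $\langle \nabla h(x), \eta\rangle = 0$, the first-order characterization of strict convexity applied at $x$ with the nonzero displacement $\eta$ yields
\begin{equation*}
    h(x + \eta) > h(x) + \langle \nabla h(x), \eta\rangle = 1 + 0 = 1,
\end{equation*}
that is, $\|x+\eta\|_p^p > 1$, and hence $\|x+\eta\|_p > 1$, which is the claim. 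Equivalently, one can work with $g(t) := h(x + t\eta)$: it is $C^1$ and strictly convex with $g'(0) = \langle \nabla h(x), \eta\rangle = 0$, so $g'$ is strictly increasing and positive on $(0,\infty)$, whence $g(1) > g(0) = 1$.

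The only point that requires any care is the justification of strict convexity of $\lvert w\rvert^p$ across the kink at $w = 0$, where $\lvert w\rvert^p$ is $C^1$ but fails to be $C^2$ when $1 < p < 2$; arguing through the strict monotonicity of the first derivative (rather than positivity of the second derivative) handles this uniformly in $p \in (1,\infty)$. Everything else is an immediate consequence of the tangent-space description~\eqref{eq:tangent} and a standard convexity inequality, which is exactly why the case of general $p$ is no harder than $p = 2$ once the right functional $h$ is used.
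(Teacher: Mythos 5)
Your proof is correct, and it follows the same overall skeleton as the paper's (strict convexity of $h(y) = \|y\|_p^p$ combined with $\nabla h(x)^T\eta = 0$ from~\eqref{eq:tangent}), but the way you establish strict convexity is genuinely different and, arguably, cleaner. The paper proves strict convexity of $h$ via Minkowski's inequality together with the strict convexity and monotonicity of $w \mapsto w^p$ on $\mathbb{R}_+$, which forces a careful case analysis of when both inequalities in the chain $\|\alpha y + (1-\alpha)z\|_p^p \le (\alpha\|y\|_p + (1-\alpha)\|z\|_p)^p \le \alpha\|y\|_p^p + (1-\alpha)\|z\|_p^p$ degenerate to equalities simultaneously; it then restricts $h$ to the line $t \mapsto x + t\eta$ and argues that $t=0$ is the unique minimizer of the resulting strictly convex one-variable function. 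You instead exploit the separable structure $h(x) = \sum_i \lvert x_i\rvert^p$: strict convexity of the scalar function $w \mapsto \lvert w\rvert^p$ (via strict monotonicity of its derivative $p\sgn(w)\lvert w\rvert^{p-1}$, which correctly sidesteps the failure of $C^2$ smoothness at $w=0$ when $1<p<2$) plus the fact that a sum of strictly convex functions of disjoint coordinates is strictly convex, and then you conclude in one line from the first-order gradient inequality $h(x+\eta) > h(x) + \langle\nabla h(x),\eta\rangle$. Your route avoids invoking the equality case of Minkowski's inequality entirely, which is the most delicate ingredient of the paper's argument; the trade-off is that the paper's Minkowski-based argument would survive for a norm whose $p$-th power is not coordinate-separable, whereas yours is tied to the specific form of the $p$-norm. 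Both are valid; no gaps.
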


\begin{proof}
Note that the function $h(y) := \|y\|_p^p$ is not of class $C^2$ in the entire $\mathbb{R}^n$ when~\mbox{$1 < p < 2$}.
Therefore, we avoid using the Hessian matrix in the following discussion to address the general case.

We first show that $h$ is a strictly convex function in $\mathbb{R}^n$.
For $y, z \in \mathbb{R}^n$ with $y \neq z$ and~$\alpha \in (0, 1)$, Minkowski's inequality (the triangle inequality for the $p$-norm) as well as convexity and monotonicity of the function $w \mapsto w^p$ on $\mathbb{R}_+ := \{w \in \mathbb{R} \mid w \ge 0\}$ yield that
\begin{equation}
\label{eq:minkowski}
    \|\alpha y + (1-\alpha) z\|_p^p \le (\alpha \|y\|_p + (1-\alpha)\|z\|_p)^p
    \le \alpha \|y\|_p^p + (1-\alpha)\|z\|_p^p.
\end{equation}
We now assume that both equalities in \eqref{eq:minkowski} simultaneously hold.
Then, the first equality implies that $y = cz$ for some $c \ge 0$ or $z = 0$ from Minkowski's inequality theory for $p \in (1, \infty)$.
Furthermore, from the second equality and the strict convexity of $w \mapsto  w^p$ on $\mathbb{R}_+$, we have~$\|y\|_p = \|z\|_p$.
If $y = cz$ with $c \ge 0$, then $\|y\|_p = \|z\|_p$ implies $c = 1$ or $\|y\|_p = \|z\|_p = 0$.
Otherwise, we have $z = 0$; and $\|y\|_p = \|z\|_p$ then means $y = z = 0$.
In any case, we have $y = z$, which contradicts the assumption that~$y \neq z$.
Therefore, both equalities in~\eqref{eq:minkowski} do not hold at the same time, meaning
\begin{equation}
        h(\alpha y + (1-\alpha) z) = \|\alpha y + (1-\alpha) z\|_p^p < \alpha \|y\|_p^p + (1-\alpha)\|z\|_p^p = \alpha h(y) + (1-\alpha)h(z).
\end{equation}
This proves that $h$ is strictly convex.

By using the strict convexity of $h$, we can show that $\phi(t) := h(x+t\eta) = \|x + t\eta\|_p^p$ is a strictly convex function on $\mathbb{R}$ for $x \in S^{n-1}_p$ and $\eta \in T_x S^{n-1}_p$ with $\eta \neq 0$.
Indeed, for any $s, t \in \mathbb{R}$ with $s \neq t$ and $\alpha \in (0,1)$, it follows from the strict convexity of $h$ and the fact $x+s\eta \neq x+t\eta$ that
\begin{align}
    \phi(\alpha s + (1-\alpha) t)
    &= h(x + (\alpha s + (1-\alpha) t)\eta)\nonumber\\
    &=h(\alpha(x + s \eta) + (1-\alpha)(x + t \eta))\nonumber\\
    &< \alpha h(x+s\eta) + (1-\alpha)h(x+t\eta)\nonumber\\
    &= \alpha\phi(s) + (1-\alpha)\phi(t).
\end{align}

Subsequently, we show that $t = 0$ is the unique minimizer of $\phi$.
Since $\phi$ is strictly convex, it suffices to prove that $\phi'(0) = 0$, which is shown as
\begin{equation}
    \phi'(0) = \nabla h(x)^T \eta = p(\sgn(x) \odot \lvert x\rvert^{p-1})^T\eta = 0
\end{equation}
from~\eqref{eq:grad_normpp} and~\eqref{eq:tangent}.

In conclusion, we obtain $\|x+t\eta\|_p^p = \phi(t) > \phi(0) = \|x\|_p^p = 1$ for all $t \neq 0$, where the case of $t = 1$ implies that the desired inequality $\|x+\eta\|_p > 1$ holds.
\end{proof}

Considering Proposition~\ref{prop:norm}, we propose a retraction $R$ on $S^{n-1}_p$ as
\begin{equation}
\label{eq:retraction}
    R_x(\eta) := \frac{x+\eta}{\|x+\eta\|_p}, \qquad \eta \in T_x S^{n-1}_p, \quad x \in S^{n-1}_p.
\end{equation}
This is simply the normalization (with respect to the $p$-norm) of~\mbox{$x + \eta$}, which is not on $S^{n-1}_p$ when $\eta \neq 0$.
Note that the denominator in~\eqref{eq:retraction} is ensured to be nonzero from Proposition~\ref{prop:norm}.

\begin{proposition}
Assume that $p \in (1, \infty)$.
The map $R$ defined by~\eqref{eq:retraction} is a retraction on~$S^{n-1}_p$.
\end{proposition}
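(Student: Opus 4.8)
The plan is to check the three defining conditions of a retraction directly from the closed form~\eqref{eq:retraction}: that $R$ is well defined on $TS^{n-1}_p$ with values in $S^{n-1}_p$, that it is smooth (of class $C^r$ with $r$ as in Theorem~\ref{thm:submanifold}), and that each restriction $R_x$ satisfies $R_x(0_x)=x$ together with $\D R_x(0_x)=\id_{T_x S^{n-1}_p}$. The only genuinely nontrivial ingredient, namely that the denominator $\|x+\eta\|_p$ never vanishes, is already secured: for $\eta=0$ it equals $\|x\|_p=1$, and for $\eta\neq 0$ Proposition~\ref{prop:norm} gives $\|x+\eta\|_p>1>0$. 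Positive homogeneity of the $p$-norm then yields $\|R_x(\eta)\|_p=\|x+\eta\|_p/\|x+\eta\|_p=1$, so $R$ indeed maps into $S^{n-1}_p$.

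For smoothness I would argue that the formula $(x,\eta)\mapsto (x+\eta)/\|x+\eta\|_p$ defines a map on the open set $U:=\{(x,\eta)\in\mathbb{R}^n\times\mathbb{R}^n\mid x+\eta\neq 0\}$: on $U$ we have $\|x+\eta\|_p = h(x+\eta)^{1/p}$ with $h(y):=\|y\|_p^p$ of class $C^r$ (as shown in the proof of Theorem~\ref{thm:submanifold}) and $h(x+\eta)>0$, while $w\mapsto w^{1/p}$ is $C^\infty$ on $(0,\infty)$; dividing the $C^\infty$ map $(x,\eta)\mapsto x+\eta$ by this nonvanishing $C^r$ scalar keeps the quotient $C^r$ on $U$. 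Since $TS^{n-1}_p\subset U$ by Proposition~\ref{prop:norm}, restricting to $TS^{n-1}_p$, regarded as embedded in $\mathbb{R}^n\times\mathbb{R}^n$, gives the required smoothness of $R$. The centering condition is immediate: $R_x(0_x)=x/\|x\|_p=x$ since $x\in S^{n-1}_p$.

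The remaining point is the differential condition. Fixing $\xi\in T_x S^{n-1}_p$, I would compute $\D R_x(0_x)[\xi]$ along the line $t\mapsto t\xi$. Writing $\psi(t):=\|x+t\xi\|_p=\phi(t)^{1/p}$ with $\phi(t):=\|x+t\xi\|_p^p$, the quotient rule gives
\begin{equation*}
\left.\frac{d}{dt}R_x(t\xi)\right|_{t=0}=\frac{\xi}{\psi(0)}-\frac{\psi'(0)}{\psi(0)^2}\,x .
\end{equation*}
Here $\psi(0)=\|x\|_p=1$, and $\psi'(0)=\tfrac1p\phi(0)^{1/p-1}\phi'(0)=\tfrac1p\phi'(0)$, where $\phi'(0)=\nabla h(x)^T\xi=p(\sgn(x)\odot\lvert x\rvert^{p-1})^T\xi=0$ by~\eqref{eq:grad_normpp} and the tangency characterization~\eqref{eq:tangent}. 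Hence $\psi'(0)=0$ and $\D R_x(0_x)[\xi]=\xi$, i.e.\ $\D R_x(0_x)=\id_{T_x S^{n-1}_p}$, which completes the argument.

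I do not expect a serious obstacle here: the substantive geometry was isolated in Proposition~\ref{prop:norm}, and the derivative computation hinges solely on the identity $(\sgn(x)\odot\lvert x\rvert^{p-1})^T\xi=0$ forcing $\psi'(0)=0$. The only point requiring a little care is the bookkeeping of differentiability classes — that $S^{n-1}_p$, its tangent bundle, and $R$ are all treated within the same $C^r$ (rather than $C^\infty$) category, consistent with the notion of smoothness adopted after Theorem~\ref{thm:submanifold}.
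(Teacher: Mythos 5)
Your proposal is correct and follows essentially the same route as the paper: the paper likewise treats $R_x(0_x)=x$ and $\|R_x(\eta)\|_p=1$ as immediate and concentrates on the quotient-rule computation of $\D R_x(0_x)[\eta]$, which collapses to $\eta$ because $(\sgn(x)\odot\lvert x\rvert^{p-1})^T\eta=0$ and $\|x\|_p=1$. Your only additions are the (correct) explicit bookkeeping of well-definedness and the $C^r$ smoothness class, which the paper leaves implicit.
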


\begin{proof}
It is clear that $\|R_x(\eta)\|_p = 1$ and $R_x(0_x) = x$ hold for any $x \in S^{n-1}_p$ and~\mbox{$\eta \in T_x S^{n-1}_p$}.
To prove that $\D R_x(0_x) = \id_{T_x S^{n-1}_p}$ holds, we use~\eqref{eq:grad_norm}, i.e., the fact that the gradient of $x\mapsto \|x\|_p$ is written as $\|x\|_p^{1-p}\sgn(x) \odot \lvert x\rvert^{p-1}$.
Then, we can compute $\D R_x(0_x)[\eta]$ for $\eta \in T_x S^{n-1}_p$ as
\begin{align}
     & \quad \ \D R_x(0_x)[\eta] = \frac{d}{dt}R_x(t\eta)\bigg\vert_{t=0}\nonumber\\
    &= \frac{\eta\|x+t\eta\|_p - (x+t\eta)\big(\|x+t\eta\|_p^{1-p}\sgn(x+t\eta) \odot \lvert x+t\eta\rvert^{p-1}\big)^T \eta}{\|x+t\eta\|_p^2}\bigg\vert_{t=0}\nonumber\\
    &= \eta - x(\sgn(x)\odot \lvert x\rvert^{p-1})^T \eta = \eta,\label{eq:DR0}
\end{align}
where we used $\|x\|_p = 1$ and $(\sgn(x)\odot \lvert x\rvert^{p-1})^T \eta = 0$ from~\eqref{eq:tangent}.
\end{proof}

To derive the inverse of $R$, we fix $x, y \in S^{n-1}_p$ and assume that $\eta \in T_x S^{n-1}_p$ satisfies~\mbox{$R_x(\eta) = y$}.
Then, $\eta$ should satisfy $x + \eta = \alpha y$ for some $\alpha > 0$.
Multiplying the equality by $(\sgn(x) \odot \lvert x \rvert^{p-1})^T$ from the left and noting that~\mbox{$x \in S^{n-1}_p$} and $\eta \in T_x S^{n-1}_p$, we obtain~$\alpha = 1 / ((\sgn(x) \odot \lvert x \rvert^{p-1})^T y)$.
Therefore, $\eta$ should satisfy
\begin{equation}
\label{eq:eta}
    \eta = \alpha y - x = \frac{y}{(\sgn(x) \odot \lvert x \rvert^{p-1})^T y} - x.
\end{equation}
However, this is necessary but not sufficient for $R_x(\eta) = y$.
In fact, for certain $x, y \in S^{n-1}_p$, there may not exist $\eta$ such that $R_x(\eta) = y$.
The following proposition elaborates on this issue.

\begin{proposition}
Assume that $p \in (1, \infty)$.
For any $x \in S^{n-1}_p$, the inverse of $R_x$ defined in~\eqref{eq:retraction} is given by
\begin{equation}
\label{eq:invretr}
    R_x^{-1}(y) = \frac{y}{(\sgn(x) \odot \lvert x \rvert^{p-1})^T y} - x, \qquad y \in D_x
\end{equation}
where the domain $D_x$ of $R_x^{-1}$ is $D_x = \{y \in S^{n-1}_p \mid (\sgn(x) \odot \lvert x \rvert^{p-1})^T y > 0\}$.
\end{proposition}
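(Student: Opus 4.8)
The plan is to prove that $R_x$, restricted to $T_x S^{n-1}_p$, is a bijection onto $D_x$ whose inverse is the stated formula. Write $v := \sgn(x)\odot\lvert x\rvert^{p-1}$ for brevity, so that the normalization identity and the tangency condition~\eqref{eq:tangent} read $v^T x = \|x\|_p^p = 1$ and $T_x S^{n-1}_p = \{\xi \in \mathbb{R}^n \mid v^T\xi = 0\}$. I would organize the argument into three steps: (i) show that any $\eta \in T_x S^{n-1}_p$ with $R_x(\eta) = y$ is forced to be the right-hand side of~\eqref{eq:invretr}, and that this in turn forces $v^T y > 0$, i.e.\ $y \in D_x$; (ii) show conversely that for every $y \in D_x$ the formula does produce a tangent vector $\eta$ with $R_x(\eta) = y$; (iii) deduce injectivity and conclude.

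For step (i), I would start from the relation already derived just before the proposition: $R_x(\eta) = y$ implies $x + \eta = \alpha y$ with $\alpha = \|x+\eta\|_p$, and $\alpha \ge 1 > 0$ by Proposition~\ref{prop:norm} (with $\alpha = 1$ exactly when $\eta = 0$, $y = x$). Left-multiplying $x + \eta = \alpha y$ by $v^T$ and using $v^T x = 1$ together with $v^T\eta = 0$ gives $1 = \alpha\, v^T y$, hence $v^T y = 1/\alpha > 0$, so $y \in D_x$, and substituting $\alpha = 1/(v^T y)$ back into $\eta = \alpha y - x$ reproduces~\eqref{eq:eta}/\eqref{eq:invretr} exactly. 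A useful byproduct is that no $y \notin D_x$ can lie in the image of $R_x|_{T_x S^{n-1}_p}$, so the displayed domain is the largest possible.

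For step (ii), fix $y \in D_x$, set $c := v^T y > 0$ and $\eta := y/c - x$. A one-line computation $v^T\eta = \tfrac{1}{c}\,v^T y - v^T x = 1 - 1 = 0$ shows $\eta \in T_x S^{n-1}_p$ via~\eqref{eq:tangent}. Then $x + \eta = y/c$, and since $c > 0$ and $\|y\|_p = 1$ we get $\|x+\eta\|_p = \|y/c\|_p = 1/c$, whence $R_x(\eta) = (y/c)/(1/c) = y$. So every point of $D_x$ has a preimage, unique by step (i); in particular $R_x|_{T_x S^{n-1}_p}$ is injective (two preimages of the same $y$ satisfy the same formula), hence a bijection onto $D_x$, and~\eqref{eq:invretr} is its genuine two-sided inverse. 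I would also note in passing that $D_x$ is an open subset of $S^{n-1}_p$ containing $x$, since $v^T x = 1 > 0$ and $y \mapsto v^T y$ is continuous, so $R_x^{-1}$ is defined on a neighborhood of $x$ as an inverse retraction should be.

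I do not anticipate a serious obstacle: the necessity computation is essentially already in the text, and sufficiency is a short verification relying only on the two identities $v^T x = 1$ and~\eqref{eq:tangent}, which hold for all $p \in (1,\infty)$, so no case distinction on $p$ is needed. The one point that requires genuine care — and the only real content beyond bookkeeping — is upgrading the condition $v^T y \neq 0$ implicit in~\eqref{eq:eta} to the strict inequality $v^T y > 0$, which comes from the positivity of $\alpha = \|x+\eta\|_p$, and then recognizing that this strict inequality is not only sufficient but necessary, so that $D_x$ cannot be enlarged.
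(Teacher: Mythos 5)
Your proposal is correct and follows essentially the same route as the paper: both derive the candidate $\eta_{x,y}$ as the unique possible preimage from $x+\eta=\alpha y$, verify it lies in $T_x S^{n-1}_p$, and check that $R_x(\eta_{x,y})=y$ exactly when $(\sgn(x)\odot\lvert x\rvert^{p-1})^T y>0$. The only cosmetic difference is that you obtain the positivity condition from $\alpha=\|x+\eta\|_p>0$ in the necessity direction, whereas the paper gets it by computing $R_x(\eta_{x,y})=\pm y$ according to the sign of the inner product; both are valid.
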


\begin{proof}
For $y$ satisfying $(\sgn(x) \odot \lvert x \rvert^{p-1})^T y = 0$, the right-hand side of~\eqref{eq:invretr} is not defined.
We assume that $(\sgn(x) \odot \lvert x \rvert^{p-1})^T y \neq 0$ and denote the right-hand side of~\eqref{eq:invretr} by $\eta_{x,y}$, which is in $T_x S^{n-1}_p$ because $(\sgn(x) \odot \lvert x \rvert^{p-1})^T \eta_{x,y} = 1 -1 = 0$.
Then, we have
\begin{align}
    R_x(\eta_{x,y}) &= \frac{x + \eta_{x,y}}{\|x + \eta_{x,y}\|_p} = \frac{y}{\sgn((\sgn(x) \odot \lvert x \rvert^{p-1})^T y)} \nonumber\\
    &= \begin{cases}
    y \quad \text{if $(\sgn(x) \odot \lvert x \rvert^{p-1})^T y > 0$}\\
    -y \quad \text{if $(\sgn(x) \odot \lvert x \rvert^{p-1})^T y < 0$}
    \end{cases}.
\end{align}
Furthermore, if $\eta \in T_x S^{n-1}_p$ satisfies $R_x(\eta) = y$, then $\eta$ should be equal to $\eta_{x,y}$, as discussed in~\eqref{eq:eta}.
Therefore, $R_x(\eta) = y$ holds if and only if $(\sgn(x) \odot \lvert x \rvert^{p-1})^T y > 0$ and $\eta = \eta_{x,y}$.
This completes the proof.
\end{proof}

\subsection{Inverse of projective retraction}
\label{subsec:retr_proj}
Another natural retraction is the projective retraction~\cite{absil2012projection}.
The projective retraction $R^{\proj}$ on~$S^{n-1}_p$ is given by
\begin{equation}
\label{eq:Rproj}
    R^{\proj}_x(\eta) = \argmin_{y \in S^{n-1}_p}\|(x+\eta)-y\|_2, \qquad \eta \in T_x S^{n-1}_p, \quad x \in S^{n-1}_p.
\end{equation}

\begin{remark}
\label{rem:proj_retr}
Note that the projection onto any closed convex set in $\mathbb{R}^n$ regarding the~\mbox{$2$-norm} is unique~\cite[Section 8.1]{boyd2004convex}.
Therefore, because the unit ball~\mbox{$B^{n}_p := \{x \in \mathbb{R}^n \mid \|x\|_p \le 1\}$} with~$p$-norm is obviously a closed convex set in $\mathbb{R}^n$,
vector $y \in B^{n}_p$ that minimizes the distance $\|(x+\eta) - y\|_2$ uniquely exists for a given $x \in S^{n-1}_p$ and $\eta \in T_x S^{n-1}_p$.
Since~$x+\eta$ is outside~$B^{n-1}_p$ unless $\eta = 0$ from Proposition~\ref{prop:norm}, the right-hand side in~\eqref{eq:Rproj} is equal to the uniquely existing projection of~$x+\eta$ onto $B^{n}_p$ (clearly, we have~$R^{\proj}_x(\eta) = x$ when $\eta = 0$).
\end{remark}

The vector $R^{\proj}_x(\eta)$ satisfies $(x+\eta) - R^{\proj}_x(\eta) \in N_x S^{n-1}_p$, which is implied by~\cite{absil2012projection} or is a direct consequence of the Lagrange multiplier method.
Therefore, there exists $\alpha \in \mathbb{R}$ such that
\begin{equation}
\label{eq:proj1}
    R^{\proj}_x(\eta) = x+\eta - \alpha\sgn(R^{\proj}_x(\eta)) \odot \lvert R^{\proj}_x(\eta) \rvert^{p-1},
\end{equation}
where $\alpha$ is determined such that $R^{\proj}_x(\eta) \in S^{n-1}_p$ holds, i.e.,
\begin{equation}
\label{eq:proj2}
    \|x+\eta - \alpha\sgn(R^{\proj}_x(\eta)) \odot \lvert R^{\proj}_x(\eta) \rvert^{p-1}\|_p = 1.
\end{equation}
However, it may be difficult to explicitly express $R^{\proj}_x(\eta)$ by solving~\eqref{eq:proj1} and~\eqref{eq:proj2}.

\begin{remark}
\label{rem:p2}
When $p = 2$, Eq.~\eqref{eq:proj1} is reduced to $R^{\proj}_x(\eta) = x+\eta - \alpha R^{\proj}_x(\eta)$, i.e., we have~$(\alpha + 1)R^{\proj}_x(\eta) = (x+\eta)$.
Then, $\|R^{\proj}_x(\eta)\|_2 = 1$ implies $\lvert \alpha + 1\rvert = \|x + \eta\|_2$.
Hence, we obtain $(x+\eta) / (\alpha + 1) = \pm (x+\eta) / \|x+\eta\|_2$, among which $(x+\eta) / \|x+\eta\|_2$ is closer to~$x+\eta$.
In summary, when $p = 2$, we have $R^{\proj}_x(\eta) = (x+\eta) / \|x+\eta\|_2$, which is equal to the retraction by normalization in Section~\ref{subsec:retr_normalization}.
\end{remark}

Although the above discussion implies that the projective retraction on~$S^{n-1}_p$ for general~\mbox{$p \in (1, \infty)$} may not provide as successful a result as the retraction by normalization, the inverse of $R^{\proj}_x$ can be discussed more practically.
For given~\mbox{$x, y \in S^{n-1}_p$}, if $\eta \in T_x S^{n-1}_p$ satisfies~$R^{\proj}_x(\eta) = y$, then~$x + \eta - y \in N_y S^{n-1}_p$ should hold.
Therefore, there exists $\alpha_{x,y} \in \mathbb{R}$ such that $x + \eta - y = \alpha_{x,y} \sgn(y) \odot \lvert y\rvert^{p-1}$.
From $x \in S^{n-1}_p$ and $\eta \in T_x S^{n-1}_p$, we can obtain an explicit expression for $\alpha_{x,y}$ as in Proposition~\ref{prop:inv_proj_retr}.
Furthermore, it seems that $\alpha_{x,y}$ should be nonnegative by analogy with the discussion of the case $p=2$ in Remark~\ref{rem:p2}.
We discuss these rigorously in the proof of the proposition using the Karush--Kuhn--Tucker (KKT) conditions.

\begin{proposition}
\label{prop:inv_proj_retr}
Assume that $p \in (1, \infty)$.
For any $x \in S^{n-1}_p$, the inverse of $R^{\proj}_x$ in~\eqref{eq:Rproj} is given by
\begin{align}
\label{eq:retr_proj}
    (R^{\proj}_x)^{-1}(y) &= y-x + \alpha_{x,y}\sgn(y) \odot \lvert y\rvert^{p-1}\nonumber\\
    &= \left(I - \frac{(\sgn(y) \odot \lvert y\rvert^{p-1})(\sgn(x) \odot \lvert x\rvert^{p-1})^T}{(\sgn(y) \odot \lvert y\rvert^{p-1})^T(\sgn(x) \odot \lvert x\rvert^{p-1})}\right)(y-x), \qquad y \in D_x,
\end{align}
where
\begin{equation}
\label{eq:alpha_proj2}
    \alpha_{x,y} := \frac{1 - (\sgn(x) \odot \lvert x\rvert^{p-1})^T y}{(\sgn(x) \odot \lvert x\rvert^{p-1})^T(\sgn(y) \odot \lvert y\rvert^{p-1})}
\end{equation}
and the domain of $(R^{\proj}_x)^{-1}$ is 
\begin{equation}
\label{eq:domain_proj}
    D_x = \{y \in S^{n-1}_p \mid (\sgn(x) \odot \lvert x\rvert^{p-1})^T(\sgn(y) \odot \lvert y\rvert^{p-1}) \neq 0,\, \alpha_{x,y} \ge 0\}.
\end{equation}
\end{proposition}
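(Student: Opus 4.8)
The plan is to recognize the projection defining $R^{\proj}_x$ as a \emph{convex} optimization problem and to apply the Karush--Kuhn--Tucker conditions, which for this problem are both necessary and sufficient. Throughout, write $u := \sgn(x) \odot \lvert x\rvert^{p-1}$ and $v := \sgn(y) \odot \lvert y\rvert^{p-1}$ for brevity, and recall from \eqref{eq:tangent} that $\eta \in T_x S^{n-1}_p$ means $u^T\eta = 0$, that $x \in S^{n-1}_p$ gives $u^T x = \|x\|_p^p = 1$, and that $v \neq 0$ since $y \neq 0$.

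First I would set up the convex reformulation. By Remark~\ref{rem:proj_retr}, for $\eta \neq 0$ the point $R^{\proj}_x(\eta)$ is the unique minimizer of $z \mapsto \tfrac12\|(x+\eta)-z\|_2^2$ over the closed convex ball $B^n_p = \{z \mid g(z) \le 0\}$, where $g(z) := \|z\|_p^p - 1$ is convex---indeed $\|\cdot\|_p^p$ was shown to be strictly convex in the proof of Proposition~\ref{prop:norm}---and is of class $C^1$ for $p \in (1,\infty)$. Slater's condition holds ($z = 0$ is strictly feasible), so $z^\ast$ solves this problem if and only if there is a multiplier $\mu \ge 0$ with $z^\ast - (x+\eta) + \mu\nabla g(z^\ast) = 0$ and $\mu\, g(z^\ast) = 0$. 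Using $\nabla g(z) = \nabla h(z) = p\,\sgn(z)\odot\lvert z\rvert^{p-1}$ from \eqref{eq:grad_normpp} and specializing to $z^\ast = y \in S^{n-1}_p$ (for which $g(y) = 0$, so complementary slackness is automatic), the equation $R^{\proj}_x(\eta) = y$ becomes: $\eta \in T_x S^{n-1}_p$ and $(x+\eta) - y = \mu p\, v$ for some $\mu \ge 0$.

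Next I would solve this system. Left-multiplying $(x+\eta) - y = \mu p\, v$ by $u^T$ and using $u^T\eta = 0$ and $u^T x = 1$ gives $1 - u^T y = \mu p\,(u^T v)$. If $u^T v = 0$, this forces $u^T y = 1$ but leaves $\mu \ge 0$ free, so infinitely many $\eta$ map to $y$ and the inverse is not well defined; hence such $y$ must be excluded, which explains the first condition in \eqref{eq:domain_proj}. If $u^T v \neq 0$, then $\mu p = (1 - u^T y)/(u^T v) = \alpha_{x,y}$, whence $\mu \ge 0 \iff \alpha_{x,y} \ge 0$ (the second condition in \eqref{eq:domain_proj}), and $\eta = y - x + \alpha_{x,y} v$, which is the first line of \eqref{eq:retr_proj}; the matrix form follows from $(y - x) + \tfrac{1 - u^T y}{u^T v}\,v = \bigl(I - \tfrac{v u^T}{u^T v}\bigr)(y-x)$, again using $u^T x = 1$. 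Conversely, for $y \in D_x$ I would check directly that the vector $\eta_{x,y}$ in \eqref{eq:retr_proj} lies in $T_x S^{n-1}_p$ (compute $u^T\eta_{x,y} = u^Ty - 1 + \alpha_{x,y}u^Tv = 0$) and that $(x + \eta_{x,y}) - y = \alpha_{x,y} v = p\,(\alpha_{x,y}/p)\,v$ with $\alpha_{x,y}/p \ge 0$, so the KKT conditions hold and $R^{\proj}_x(\eta_{x,y}) = y$, the degenerate subcase $\eta_{x,y} = 0$ (i.e. $y = x$) being trivial. Uniqueness of $\eta$ then follows from the necessity direction.

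I expect the main obstacle to be the domain bookkeeping rather than any hard estimate: one must argue that the KKT conditions are genuinely sufficient here (which rests on the strict convexity of $\|\cdot\|_p^p$ already established and on Slater's condition), and then translate ``$\exists\,\mu \ge 0$'' precisely into the two conditions defining $D_x$---in particular noticing that the orthogonality case $u^T v = 0$ must be discarded because it destroys injectivity of $R^{\proj}_x$. The remaining steps (isolating $\alpha_{x,y}$ and rewriting it in matrix form) are routine linear algebra.
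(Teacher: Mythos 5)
Your proof is correct and follows essentially the same route as the paper's: reformulate the projection as a convex program, invoke Slater's condition so that the KKT conditions are necessary and sufficient, left-multiply the stationarity equation by $(\sgn(x)\odot\lvert x\rvert^{p-1})^T$ to identify the multiplier with $\alpha_{x,y}$, and verify sufficiency directly. The only minor divergence is the degenerate case $(\sgn(x)\odot\lvert x\rvert^{p-1})^T(\sgn(y)\odot\lvert y\rvert^{p-1})=0$: you exclude such $y$ because the multiplier, and hence the preimage, would no longer be unique, whereas the paper argues that this case cannot occur because the stationarity equation would force $y=0$; both treatments are consistent with the stated domain $D_x$.
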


\begin{proof}
The second equality in~\eqref{eq:retr_proj} directly follows from~\mbox{$(\sgn(x) \odot \lvert x \rvert^{p-1})^T x = 1$}.
We define~$\eta_{x,y} := y - x + \alpha_{x,y} \sgn(y) \odot \lvert y \rvert^{p-1}$ with $\alpha_{x,y}$ in~\eqref{eq:alpha_proj2}.
Then, what we need to prove is that~\mbox{$R^{\proj}_x(\eta) = y$} holds for $\eta \in T_x S^{n-1}_p$ if and only if $y$ belongs to the right-hand side of~\eqref{eq:domain_proj} and $\eta = \eta_{x,y}$.

To see this in light of~\eqref{eq:Rproj} and Remark~\ref{rem:proj_retr}, we must verify that $z = y$ is the optimal solution to the following optimization problem with a fixed $\eta \in T_x S^{n-1}_p$ if and only if $\alpha_{x,y}$ in~\eqref{eq:alpha_proj2} is well-defined and nonnegative and $\eta = \eta_{x,y}$:
\begin{alignat*}{2}
&\text{minimize}&\quad & \|(x+\eta) - z\|_2^2\\
&\text{subject to}&\quad & \|z\|_p^p \le 1, \ z \in \mathbb{R}^n,
\end{alignat*}
where the decision variable vector is $z$.
This is a convex optimization problem because both~$z \mapsto \|(x+\eta) - z\|_2^2$ and $z \mapsto \|z\|_p^p - 1$ are convex.
Furthermore, the problem satisfies Slater's condition~\cite[Section 5.2.3]{boyd2004convex}, i.e., it is strictly feasible (e.g., with $z = 0$).
Therefore, the condition that~$z = y$ is optimal for the optimization problem is equivalent to saying that there exists~$\lambda \in \mathbb{R}$ such that $z = y$ and $\lambda$ satisfy the KKT conditions for the problem, which are written as
\begin{align}
    2(y-(x+\eta)) + \lambda p \sgn(y) \odot \lvert y\rvert^{p-1} = 0,\\
    \| y \|_p^p \le 1,\\
    \lambda \ge 0,\\
    \lambda(\|y\|_p^p -1) = 0.
\end{align}
Since $\|y\|_p = 1$, they are equivalent to
\begin{align}
    \eta = y-x + \frac{p}{2}\lambda\sgn(y) \odot \lvert y\rvert^{p-1},\label{eq:etaxy}\\
    \lambda \ge 0.\label{eq:etaxy2}
\end{align}
Noting that $x \in S^{n-1}_p$ and $\eta \in T_x S^{n-1}_p$, we multiply~\eqref{eq:etaxy} by $(\sgn(x) \odot \lvert x\rvert^{p-1})^T$ from the left to obtain
\begin{equation}
    2(1 - (\sgn(x) \odot \lvert x\rvert^{p-1})^T y) = p\lambda (\sgn(x) \odot \lvert x\rvert^{p-1})^T (\sgn(y) \odot \lvert y\rvert^{p-1}).
\end{equation}
If $(\sgn(x) \odot \lvert x\rvert^{p-1})^T(\sgn(y) \odot \lvert y \rvert^{p-1}) = 0$ holds, $1 - (\sgn(x) \odot \lvert x \rvert^{p-1})^T y = 0$ should hold, and~$\lambda$ can be any value. 
However, it then follows from~\eqref{eq:etaxy} that $y = 0$, contradicting $y \in S^{n-1}_p$.
Hence, we have $(\sgn(x) \odot \lvert x\rvert^{p-1})^T(\sgn(y) \odot \lvert y \rvert^{p-1}) \neq 0$, and~$\lambda$ is written as
\begin{equation}
    \lambda = \frac{2}{p}\frac{1-(\sgn(x) \odot \lvert x\rvert^{p-1})^T y}{(\sgn(x) \odot \lvert x\rvert^{p-1})^T (\sgn(y) \odot \lvert y\rvert^{p-1})} = \frac{2}{p}\alpha_{x,y}.
\end{equation}
Therefore, there exists $\lambda \in \mathbb{R}$ such that $z = y$ and $\lambda$ satisfy the KKT conditions~\eqref{eq:etaxy} and~\eqref{eq:etaxy2} if and only if $\eta = y-x + \alpha_{x,y} \sgn(y) \odot \lvert y\rvert^{p-1} = \eta_{x,y}$ and $\alpha_{x,y}$ is well-defined and nonnegative.
This completes the proof.
\end{proof}

\subsection{Inverse of orthographic retraction}
Other possibilities of retractions on $S^{n-1}_p$ include the orthographic retraction.
See~\cite{absil2012projection} for a discussion of orthographic retractions on general Riemannian submanifolds.

For $x \in S^{n-1}_p$ and $\eta \in T_x S^{n-1}_p$, the orthographic retraction $R^{\orth}$ is defined to satisfy~$R^{\orth}_x(\eta) = x + \eta + \zeta \in S^{n-1}_p$ for some $\zeta \in N_x S^{n-1}_p$ with the smallest norm among all normal vectors in $\{\xi \in N_x S^{n-1}_p \mid x+\eta+\xi\ \in S^{n-1}_p\}$.
Since we can express $\zeta \in N_x S^{n-1}_p$ as~$\zeta = -\alpha \sgn(x) \odot \lvert x\rvert^{p-1}$ for some $\alpha \in \mathbb{R}$, the relation $\|R^{\orth}_x(\eta)\|_p = 1$ yields the equation on~$\alpha$ as
\begin{equation}
\label{eq:alpha}
    \|x+\eta-\alpha\sgn(x) \odot \lvert x\rvert^{p-1}\|_p^p = 1.
\end{equation}

\begin{remark}
When $p = 2$, Eq.~\eqref{eq:alpha} is reduced to $(1-\alpha)^2 + \eta^T \eta = 1$, the smaller solution (with smaller absolute value) of which is given by $\alpha = 1 - \sqrt{1-\eta^T\eta}$ if $\|\eta\|_2 \le 1$.
This gives the expression $R^{\orth}_x(\eta) = \sqrt{1-\eta^T\eta} \, x + \eta$, which is a well-known result.
\end{remark}

For general $p \in (1, \infty)$, we have
\begin{equation}
    R^{\orth}_x(\eta) = x + \eta - \alpha\sgn(x) \odot \lvert x\rvert^{p-1}, \qquad \eta \in T_x S^{n-1}_p, \quad x \in S^{n-1}_p,
\end{equation}
where $\eta$ should be a tangent vector such that Eq.~\eqref{eq:alpha} has a solution and $\alpha$ is the one with the smallest absolute value of the solutions.
Unfortunately, as in the projective retraction in Section~\ref{subsec:retr_proj}, it may be difficult to explicitly express such $\alpha$ for general $p$.

However, the discussion on this retraction is still important because its inverse can be practically computed.
Subsequently, we assume that $\eta \in T_x S^{n-1}_p$ satisfies $R^{\orth}_x(\eta) = y$ for given $x, y \in S^{n-1}_p$.
Then, there exists $\alpha_{x,y} \in \mathbb{R}$ such that $x + \eta - \alpha_{x,y} \sgn(x) \odot \lvert x\rvert^{p-1} = y$.
Since $\eta \in T_x S^{n-1}_p$, multiplying both sides by $(\sgn(x) \odot \lvert x\rvert^{p-1})^T$ from the left yields
\begin{equation}
\label{eq:alpha_orth}
    \alpha_{x,y} = \frac{1-(\sgn(x) \odot \lvert x\rvert^{p-1})^T y}{(\sgn(x) \odot \lvert x\rvert^{p-1})^T(\sgn(x) \odot \lvert x\rvert^{p-1})} = \frac{1-(\sgn(x) \odot \lvert x\rvert^{p-1})^T y}{\|\lvert x\rvert^{p-1}\|_2^2}.
\end{equation}
Note that the denominator is nonzero because of $x \neq 0$.
This observation, together with the discussion on when~\eqref{eq:alpha_orth} is sufficient for $R^{\orth}_x(\eta) = y$, leads to the following proposition.

\begin{proposition}
Assume that $p \in (1, \infty)$.
For any $x \in S^{n-1}_p$, the inverse of the retraction~$R^{\orth}_x$ is given by
\begin{align}
\label{eq:retr_orth}
    (R^{\orth}_x)^{-1}(y) &= y-x + \alpha_{x,y}\sgn(x) \odot \lvert x\rvert^{p-1}, \qquad y \in D_x,
\end{align}
where
\begin{equation}
\label{eq:alpha_orth_prop}
    \alpha_{x,y} := \frac{1-(\sgn(x) \odot \lvert x\rvert^{p-1})^T y}{\|\lvert x\rvert^{p-1}\|_2^2}
\end{equation}
and the domain of $(R^{\orth}_x)^{-1}$ is 
\begin{equation}
\label{eq:domain_orth}
    D_x = \{y \in S^{n-1}_p \mid \text{$\alpha_{x,y}$ is the solution to~\eqref{eq:alpha} with the smallest absolute value}\}.
\end{equation}
\end{proposition}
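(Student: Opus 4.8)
The plan is to follow the two-step pattern already used for the normalization and projective retractions: first show that $R^{\orth}_x(\eta)=y$ forces $\eta$ to equal the candidate tangent vector $\eta_{x,y}:=y-x+\alpha_{x,y}\sgn(x)\odot\lvert x\rvert^{p-1}$ with $\alpha_{x,y}$ as in~\eqref{eq:alpha_orth_prop}, and conversely that feeding $\eta_{x,y}$ into $R^{\orth}_x$ returns $y$ precisely when $y\in D_x$.

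For the necessity direction, suppose $\eta\in T_x S^{n-1}_p$ satisfies $R^{\orth}_x(\eta)=y$. By the definition of the orthographic retraction there is a scalar $\alpha^\ast$, namely the solution of~\eqref{eq:alpha} of smallest absolute value, such that $x+\eta-\alpha^\ast\sgn(x)\odot\lvert x\rvert^{p-1}=y$. Left-multiplying by $(\sgn(x)\odot\lvert x\rvert^{p-1})^T$ and using $(\sgn(x)\odot\lvert x\rvert^{p-1})^T x=\|x\|_p^p=1$, $(\sgn(x)\odot\lvert x\rvert^{p-1})^T\eta=0$ (because $\eta\in T_x S^{n-1}_p$ by~\eqref{eq:tangent}), and $(\sgn(x)\odot\lvert x\rvert^{p-1})^T(\sgn(x)\odot\lvert x\rvert^{p-1})=\|\lvert x\rvert^{p-1}\|_2^2\neq 0$ (since $x\neq 0$), I solve for $\alpha^\ast$ and obtain exactly~\eqref{eq:alpha_orth_prop}; hence $\alpha^\ast=\alpha_{x,y}$, $\eta=\eta_{x,y}$, and, because $\alpha_{x,y}$ is this smallest-absolute-value solution, $y\in D_x$.

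For the sufficiency direction, assume $y\in D_x$ and set $\eta:=\eta_{x,y}$. First I would verify $\eta_{x,y}\in T_x S^{n-1}_p$ by computing $(\sgn(x)\odot\lvert x\rvert^{p-1})^T\eta_{x,y}=\bigl((\sgn(x)\odot\lvert x\rvert^{p-1})^T y-1\bigr)+\alpha_{x,y}\|\lvert x\rvert^{p-1}\|_2^2=0$, directly from~\eqref{eq:alpha_orth_prop}. Next, since $x+\eta_{x,y}=y+\alpha_{x,y}\sgn(x)\odot\lvert x\rvert^{p-1}$, equation~\eqref{eq:alpha} with this $\eta$ reads $\|y+(\alpha_{x,y}-\alpha)\sgn(x)\odot\lvert x\rvert^{p-1}\|_p^p=1$, which is solved by $\alpha=\alpha_{x,y}$ (giving $\|y\|_p^p=1$). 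The hypothesis $y\in D_x$ states precisely that $\alpha_{x,y}$ is the solution of smallest absolute value, so the $\alpha$ selected in the definition of $R^{\orth}_x(\eta_{x,y})$ is $\alpha_{x,y}$, whence $R^{\orth}_x(\eta_{x,y})=x+\eta_{x,y}-\alpha_{x,y}\sgn(x)\odot\lvert x\rvert^{p-1}=y$.

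I do not expect a serious obstacle: the manipulations are the same linear-algebra identities used above, and, unlike the projective case, no KKT or convexity argument is required since the orthographic retraction is characterized by the norm equation~\eqref{eq:alpha} together with a minimal-magnitude selection of its root. The one point needing care is the bookkeeping around the domain: $\eta_{x,y}$ is assembled from the closed-form $\alpha_{x,y}$ rather than from the minimization in~\eqref{eq:alpha}, so one must confirm both that this closed form is actually a root of~\eqref{eq:alpha} and that the membership condition defining $D_x$ --- being the root of smallest absolute value --- is exactly what makes $R^{\orth}_x$ return $y$. Degenerate situations (e.g.\ whether~\eqref{eq:alpha} could fail to have a root) do not arise here, because $\alpha_{x,y}$ always furnishes one.
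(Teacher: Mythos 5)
Your proposal is correct and follows essentially the same route as the paper: the necessity direction is obtained by left-multiplying $x+\eta-\alpha\,\sgn(x)\odot\lvert x\rvert^{p-1}=y$ by $(\sgn(x)\odot\lvert x\rvert^{p-1})^T$ to pin down $\alpha=\alpha_{x,y}$, and the sufficiency direction invokes the definition of $D_x$ to conclude that the orthographic retraction selects exactly $\alpha_{x,y}$ and hence returns $y$. If anything, you spell out two small details the paper leaves implicit (that $\eta_{x,y}\in T_xS^{n-1}_p$ and that $\alpha_{x,y}$ is genuinely a root of~\eqref{eq:alpha}), which is a welcome bit of extra care but not a different argument.
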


\begin{proof}
Let $\eta_{x,y} := y - x + \alpha_{x,y} \sgn(x) \odot \lvert x\rvert^{p-1}$ be the right-hand side of~\eqref{eq:retr_orth} with~$\alpha_{x,y}$ in~\eqref{eq:alpha_orth_prop}.
From the above discussion on~\eqref{eq:alpha_orth}, for a given $x \in S^{n-1}_p$, if $\eta \in T_x S^{n-1}_p$ satisfies~$R^{\orth}_x(\eta) = y$, then $y$ should belong to the right-hand side of~\eqref{eq:domain_orth} and~\mbox{$\eta = \eta_{x,y}$} should hold.

To prove the converse, we show that $R^{\orth}_x(\eta) = y$ holds if $y$ belongs to the right-hand side of~\eqref{eq:domain_orth} and $\eta=\eta_{x,y}$ holds.
Assume that $y$ and $\eta$ be such vectors, i.e.,~\mbox{$\alpha = \alpha_{x,y}$} is the solution to~\eqref{eq:alpha} with the smallest absolute value and $\eta = \eta_{x,y}$.
Then, from the definition of the orthographic retraction and the expression of~$\eta_{x,y}$, we have
\begin{equation}
    R^{\orth}_x(\eta) = R^{\orth}_x(\eta_{x,y}) = x + \eta_{x,y} - \alpha_{x,y} \sgn(x) \odot \lvert x\rvert^{p-1} = y.
\end{equation}
This completes the proof.
\end{proof}

\subsection{Discussion on exponential retraction}
On a general Riemannian manifold, another important retraction is the exponential retraction~$R := \Exp$, where~$\Exp$ is the exponential map.
However, it may be difficult to use practically.
Here, we discuss this issue.
In the following discussion, we assume that $p \ge 2$, which ensures that $S^{n-1}_p$ is a $C^2$ submanifold of $\mathbb{R}^n$ from Theorem~\ref{thm:submanifold}. 

The exponential map $\Exp$ is defined as
\begin{equation}
    \Exp_x(\eta) := \gamma_{x, \eta}(1), \qquad \eta \in T_x S^{n-1}_p, \quad x \in S^{n-1}_p,
\end{equation}
where $\gamma_{x, \eta}$ is the geodesic on $S^{n-1}_p$ emanating from $x$ in the direction of~$\eta$.
The geodesic satisfies the geodesic equation, which is derived from the condition~$\ddot \gamma_{x, \eta}(t) \in N_{\gamma_{x,\eta}(t)} S^{n-1}_p$.
For simplicity, we denote $\gamma_{x, \eta}(t)$ by $x(t)$.
Then,~$x(t) \in S^{n-1}_p$ implies $\bm{1}^T \lvert x(t)\rvert^p = 1$.
Differentiating both sides, we obtain~$(\sgn(x(t)) \odot \lvert x(t)\rvert^{p-1})^T \dot x(t) = 0$.
We further differentiate both sides to get
\begin{equation}
\label{eq:ddot}
(p-1) (\lvert x(t)\rvert^{p-2} \odot \dot x(t))^T\dot x(t) + (\sgn(x(t)) \odot \lvert x(t)\rvert^{p-1})^T\ddot x(t) = 0.
\end{equation}
From $\ddot x(t) \in N_{x(t)}S^{n-1}_p$, there exists $\alpha(t) \in \mathbb{R}$ such that~\mbox{$\ddot x(t) = \alpha(t) \sgn(x(t)) \odot \lvert x(t)\rvert^{p-1}$}.
Substituting this into~\eqref{eq:ddot}, we obtain~$\alpha(t) = -(p-1)((\lvert x(t)\rvert^{p-2})^T\dot x(t)^2) / \|\lvert x(t)\rvert^{p-1}\|_2^2$.
Therefore,~$x(t)$ satisfies the geodesic equation
\begin{equation}
\label{eq:geodesic}
    \ddot x(t) + \frac{(p-1)(\lvert 
    x(t)\rvert^{p-2})^T\dot x(t)^2}{\|\lvert x(t)\rvert^{p-1}\|_2^2}\sgn(x(t)) \odot \lvert x(t)\rvert^{p-1} = 0.
\end{equation}
Solving this equation for the case $p \neq 2$ may be difficult.
Thus, this will be dealt in a future work.

\begin{remark}
When $p = 2$, Eq.~\eqref{eq:geodesic} is reduced to $\ddot x(t) + (\dot x(t)^T \dot x(t)) x(t) = 0$, whose solution is~$x(t) = x \cos (\|\eta\|_2 t) + (\eta / \|\eta\|_2)\sin (\|\eta\|_2 t)$, where $x(0) = x$ and $\dot x(0) = \eta$, as shown in~\cite[Example 5.4.1]{AbsMahSep2008}.
\end{remark}

\section{Vector transports}
\label{sec:VT}
In addition to a retraction, a vector transport is also an important geometric tool in Riemannian optimization methods, e.g., Riemannian conjugate gradient methods~\cite{AbsMahSep2008,ring2012optimization,sakai2021sufficient,sato2016dai,sato2015new}, Riemannian quasi-Newton methods~\cite{huang2018riemannian,huang2015broyden}, and Riemannian stochastic optimization methods~\cite{sato2019riemannian,zhou2019faster}.
Let~$\mathcal{M}$ be a Riemannian manifold and $T\mathcal{M} \oplus T\mathcal{M} := \{(\eta, \xi) \mid \eta,\, \xi \in T_x \mathcal{M}, \ x \in \mathcal{M}\}$ be the Whitney sum.
A map $\mathcal{T} \colon T\mathcal{M} \oplus T\mathcal{M} \to \mathcal{M}$ is called a vector transport on $\mathcal{M}$ if there exists a retraction~$R$ on $\mathcal{M}$ and the following conditions are satisfied for any~\mbox{$x \in \mathcal{M}$}:
(i)~$\mathcal{T}_{\eta}(\xi) \in T_{R_x(\eta)}\mathcal{M}$ for any~$\eta, \, \xi \in T_x \mathcal{M}$;
(ii) $\mathcal{T}_{0_x} = \id_{T_x\mathcal{M}}$;
(iii)~$\mathcal{T}_{\eta}$ is a linear transformation in $T_x \mathcal{M}$ for any $\eta \in T_x \mathcal{M}$.

\subsection{Differentiated retraction}
An important vector transport is the differentiated retraction $\mathcal{T}^R$~\cite[Section 8.1.2]{AbsMahSep2008} associated with a retraction~$R$ on $S^{n-1}_p$ defined by
\begin{equation}
    \mathcal{T}^R_{\eta}(\xi) := \D R_x(\eta)[\xi], \qquad \eta,\, \xi \in T_x S^{n-1}_p, \quad x \in S^{n-1}_p.
\end{equation}
The differentiated retraction appears in the Riemannian (strong) Wolfe conditions and is thus used for line search in various algorithms.

Here, we derive the expression of $\mathcal{T}^R$ with the retraction $R$ defined in~\eqref{eq:retraction}.
Noting~\eqref{eq:grad_norm}, an analogous computation to~\eqref{eq:DR0} gives
\begin{align}
\label{eq:VT}
    \mathcal{T}^R_{\eta}(\xi)
    & = \D R_x(\eta)[\xi] 
    = \frac{d}{dt}R_x(\eta + t\xi)\bigg\vert_{t=0}\nonumber\\
    &= \frac{\xi\|x+\eta\|_p - (x+\eta)\big(\|x+\eta\|_p^{1-p}\sgn(x+\eta) \odot \lvert x+\eta\rvert^{p-1}\big)^T \xi}{\|x+\eta\|_p^2}\nonumber\\
    &= \frac{\xi}{\|x+\eta\|_p} - \frac{(\sgn(x+\eta) \odot \lvert x+\eta\rvert^{p-1}\big)^T \xi}{\|x+\eta\|_p^{p+1}}(x+\eta).    
\end{align}

\subsection{Vector transport based on orthogonal projection}
Since $S^{n-1}_p$ is a Riemannian submanifold of $\mathbb{R}^n$, another vector transport $\mathcal{T}^P$ on $S^{n-1}_p$ is defined by the orthogonal projection~\cite[Section 8.1.3]{AbsMahSep2008} as
\begin{equation}
    \mathcal{T}^P_{\eta}(\xi) := P_{R_x(\eta)}(\xi), \qquad \eta,\, \xi \in T_x S^{n-1}_p, \quad x \in S^{n-1}_p,
\end{equation}
where the orthogonal projection $P$ is provided by~\eqref{eq:proj}.
Specifically, if we use the retraction~\eqref{eq:retraction}, we have
\begin{align}
    \mathcal{T}^P_{\eta}(\xi) &= \left(I - \frac{(\sgn(R_x(\eta)) \odot \lvert R_x(\eta)\rvert^{p-1})(\sgn(R_x(\eta)) \odot \lvert R_x(\eta)\rvert^{p-1})^T}{\|\lvert R_x(\eta)\rvert^{p-1}\|_2^2}\right)\xi\\
    &= \xi - \frac{(\sgn(x+\eta) \odot \lvert x+\eta\rvert^{p-1})^T \xi}{\|\lvert x+\eta\rvert^{p-1}\|_2^2}\sgn(x+\eta) \odot \lvert x+\eta\rvert^{p-1}.
\end{align}

\section{Summary of theoretical results}
\label{sec:summary}
We investigated the geometry of $S^{n-1}_p$ and proposed several retractions and their inverses and vector transports.
These results are summarized in Table~\ref{tab}.

\renewcommand{\arraystretch}{1.5}
\begin{table}[htbp]
\begin{center}
\caption{Summary of theoretical results. The sphere $S^{n-1}_p$ is defined for $p \in [1, \infty]$. However, the above results are for the case of~$p \in (1, \infty)$, where $S^{n-1}_p$ is a $C^1$ submanifold of~$\mathbb{R}^n$. In addition, we assume $x, \, y \in S^{n-1}_p$ and $\xi, \, \eta \in T_x S^{n-1}_p$.}\label{tab}%
\begin{tabular}{@{}ll@{}}
\toprule
Sphere with $p$-norm & $S^{n-1}_p = \{x \in \mathbb{R}^n \mid \|x\|_p = 1\}$.\\
\midrule
Riemannian metric on $S^{n-1}_p$ & $\langle \xi, \eta\rangle_x = \xi^T \eta$.\\
Induced norm in $T_x S^{n-1}_p$ & $\|\xi\|_x = \|\xi\|_2 = \sqrt{\xi^T \xi}$.\\
Tangent space at $x$   & $T_x S^{n-1}_p = \{\xi \in \mathbb{R}^n \mid \xi^T (\sgn(x) \odot \lvert x\rvert^{p-1}) = 0\}$.  \\
Normal space at $x$   & $N_x S^{n-1}_p = \{\alpha \sgn(x) \odot \lvert x\rvert^{p-1} \mid \alpha \in \mathbb{R}\}$.  \\
Orthogonal projection onto $T_x S^{n-1}_p$   & $P_x = I - \dfrac{(\sgn(x) \odot \lvert x\rvert^{p-1})(\sgn(x) \odot \lvert x\rvert^{p-1})^T}{\|\lvert x\rvert^{p-1}\|_2^2}$.\\
\midrule
Retraction by normalization & $R_x(\eta) = \dfrac{x+\eta}{\|x+\eta\|_p}$.\\
Inverse of $R_x$ & $R_x^{-1}(y) = \dfrac{y}{(\sgn(x) \odot \lvert x \rvert^{p-1})^T y} - x$,\\ & where $y$ satisfies $(\sgn(x) \odot \lvert x\rvert^{p-1})^T y > 0$.\\
Inverse of projective retraction & $(R^{\proj}_x)^{-1}(y) = y-x + \alpha\sgn(y) \odot \lvert y\rvert^{p-1}$,\\
& where $\alpha = \dfrac{1 - (\sgn(x) \odot \lvert x\rvert^{p-1})^T y}{(\sgn(x) \odot \lvert x\rvert^{p-1})^T(\sgn(y) \odot \lvert y\rvert^{p-1})}$\\
 & and $y$ is such that $\alpha \ge 0$.\\
Inverse of orthographic retraction & $(R^{\orth}_x)^{-1}(y) = y-x +\alpha  \sgn(x) \odot \lvert x\rvert^{p-1}$\\
& where $\alpha = \dfrac{1-(\sgn(x) \odot \lvert x\rvert^{p-1})^T y}{\|\lvert x\rvert^{p-1}\|_2^2}$\\
 &  and $y$ is such that $\alpha$ is the solution to \\
 & $\|x+\eta-\alpha\sgn(x) \odot \lvert x\rvert^{p-1}\|_p^p = 1$\\
  & with the smallest absolute value.\\
\midrule
Differentiated retraction of $R$ & 
$\phantom{=}\mathcal{T}^R_{\eta}(\xi) = \D R_x(\eta)[\xi]$ \\
& $= \dfrac{\xi}{\|z\|_p} - \dfrac{(\sgn(z) \odot \lvert z\rvert^{p-1}\big)^T \xi}{\|z\|_p^{p+1}}z$,\\
& where $z = x+\eta$.\\
Vector transport by projection &
$\phantom{=}\mathcal{T}^P_{\eta}(\xi) = P_{R_x(\eta)}(\xi)$\\
    & $= \xi - \dfrac{(\sgn(z) \odot \lvert z\rvert^{p-1})^T \xi}{\|\lvert z\rvert^{p-1}\|_2^2}\sgn(z) \odot \lvert z\rvert^{p-1}$,\\
& where $z = x+\eta$.\\
\hline
\end{tabular}
\end{center}
\end{table}

\section{Applications}
\label{sec:application}
In this section, we discuss two types of applications of $S^{n-1}_p$ for optimization.

\subsection{Nonnegative constraints on spheres}
In nonlinear optimization, we can introduce squared slack variables to handle nonnegative constraints~\cite{fukuda2017note}.
Specifically, the constraint $v \ge 0$ for $v \in \mathbb{R}^n$ is equivalent to $v = x^2$ with~$x \in \mathbb{R}^n$.
This idea can be used to address optimization problems on the sphere whose decision variable vector is constrained to be nonnegative.

\subsubsection{Unconstrained and constrained optimization problems on spheres with different norms}
For $p' \ge 1$ and $p = 2p'$, $S^{n-1}_p$ can be used to handle the variable on $S^{n-1}_{p'}$ with the nonnegative constraint.
To see this, we consider the following problem:
\begin{alignat*}{2}
&\text{minimize}&\quad & g(v)\\
&\text{subject to}&\quad & v \ge 0, \ v \in S^{n-1}_{p'},
\end{alignat*}
where $g \colon S^{n-1}_{p'} \to \mathbb{R}$ is the objective function.
This is a constrained Riemannian optimization problem on $S^{n-1}_{p'}$ with the constraint~$v \ge 0$.
Defining~\mbox{$v := x^2 \ge 0$} with $x = (x_i) \in \mathbb{R}^n$, we can observe that the conditions~$v \in S^{n-1}_{p'}$ and $v \ge 0$ are equivalent to~\mbox{$\|x^2\|_{p'} = 1$}.
Regarding the left-hand side, we have the relation~\mbox{$\|x^2\|_{p'}^{p'} = \sum_{i=1}^n \lvert x_i^2\rvert^{p'} = \sum_{i=1}^n \lvert x_i\rvert^{2p'} = \|x\|_{2p'}^{2p'} = \|x\|_p^p$}.
Therefore,~\mbox{$\|x^2\|_{p'} = 1$} is equivalent to $\|x\|_p = 1$, i.e., $x \in S^{n-1}_p$.
Hence, the aforementioned optimization problem is equivalent to the following problem:
\begin{alignat*}{2}
&\text{minimize}&\quad & f(x) := g(x^2)\\
&\text{subject to}&\quad & x \in S^{n-1}_{p},
\end{alignat*}
which is an unconstrained Riemannian optimization problem on $S^{n-1}_p$.

\subsubsection{Application to nonnegative PCA}
As a particular case of $p'= 2$ and $p = 4$, we can deduce from the above discussion that solving an optimization problem on $S^{n-1}_2$ with the nonnegative constraint on the decision variable vector is equivalent to solving the corresponding optimization problem on $S^{n-1}_4$ without constraint.
An important example within this framework is the nonnegative PCA~\cite{zass2006nonnegative}.

In~\cite{liu2020simple}, the nonnegative PCA is formulated as follows:
\begin{alignat}{2}
&\text{minimize}&\quad & -v^T A v\nonumber\\
&\text{subject to}&\quad & v \ge 0, \ v \in S^{n-1}_{2},\label{prob:sphere1}
\end{alignat}
where $A$ corresponds to the variance--covariance matrix of the data to be analyzed.
We assume that $A$ is an $n \times n$ symmetric positive definite matrix.
The above problem is equivalent to the following unconstrained problem on the sphere $S^{n-1}_4$ with $4$-norm:
\begin{alignat}{2}
&\text{minimize}&\quad & f(x) := -(x^2)^T A (x^2)\nonumber\\
&\text{subject to}&\quad & x \in S^{n-1}_{4}.\label{prob:sphere2}
\end{alignat}
For an optimal solution $x_*$ to the latter problem, $v_* := x_*^2$ is an optimal solution to the former problem.

We can further show that any critical point of $f$ in Problem~\eqref{prob:sphere2} satisfies the first-order optimality conditions for Problem~\eqref{prob:sphere1}.
First, we show that the KKT conditions are first-order necessary conditions for Problem~\eqref{prob:sphere1} and investigate the conditions.

\begin{proposition}
\label{prop:PCA_KKT}
Let $v_*$ be an optimal solution to Problem~\eqref{prob:sphere1} with an $n \times n$ symmetric positive definite matrix $A$.
Then, $v_*$ satisfies
\begin{equation}
\label{eq:KKT_v}
v_* \ge 0, \quad v_*^T v_* = 1, \quad (I - v_*v_*^T)Av_* \le 0.
\end{equation}
Specifically, the $i$th element $(Av_*)_i$ of $Av_*$ satisfies $(Av_*)_i = 0$ if $(v_*)_i > 0$, and~$(Av_*)_i \le 0$ if~$(v_*)_i = 0$.
In particular, if $v_* > 0$, then $Av_* = (v_*^T Av_*)v_*$ holds, i.e., $v_*^T Av_*$ and $v_*$ are an eigenvalue and associated eigenvector of $A$, respectively.
\end{proposition}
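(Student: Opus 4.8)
The plan is to view Problem~\eqref{prob:sphere1} as a nonlinear program in $\mathbb{R}^n$ with the single equality constraint $c_0(v):=v^Tv-1=0$ and the $n$ inequality constraints $c_i(v):=-v_i\le 0$ ($i=1,\dots,n$), and to apply the Karush--Kuhn--Tucker theorem at the minimizer $v_*$. The first step is to verify a constraint qualification so that the KKT conditions are necessary, and I would use LICQ. At $v_*$ the active constraint gradients are $\nabla c_0(v_*)=2v_*$ together with $\nabla c_i(v_*)=-e_i$ (the $i$th standard basis vector) for the indices $i$ in the zero set $Z:=\{i\mid (v_*)_i=0\}$. Since $v_*^Tv_*=1$, the vector $v_*$ has at least one strictly positive coordinate $j\notin Z$; reading off the $j$th entry of a vanishing linear combination $2a\,v_*-\sum_{i\in Z}b_ie_i=0$ forces $a=0$, and then all $b_i=0$. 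Hence these gradients are linearly independent and LICQ holds at $v_*$.

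Consequently there exist $\lambda\in\mathbb{R}$ and $\mu=(\mu_i)\in\mathbb{R}^n$ with $\mu\ge 0$ satisfying the stationarity and complementarity relations
\begin{equation*}
-2Av_*+2\lambda v_*-\mu=0,\qquad \mu_i(v_*)_i=0\quad(i=1,\dots,n),
\end{equation*}
in addition to the primal feasibility $v_*\ge 0$ and $v_*^Tv_*=1$; the last two are exactly the first two assertions of~\eqref{eq:KKT_v}. To identify $\lambda$, I would left-multiply the stationarity equation by $v_*^T$ and use $v_*^Tv_*=1$ together with $v_*^T\mu=\sum_i(v_*)_i\mu_i=0$ (complementary slackness), obtaining $\lambda=v_*^TAv_*$.

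Substituting this back gives $\mu=2\lambda v_*-2Av_*=-2(I-v_*v_*^T)Av_*$, so the sign condition $\mu\ge 0$ is precisely $(I-v_*v_*^T)Av_*\le 0$, the third assertion of~\eqref{eq:KKT_v}. The componentwise refinement then drops out of complementary slackness applied to this identity: for $i$ with $(v_*)_i>0$ we have $\mu_i=0$, hence the $i$th entry of $(I-v_*v_*^T)Av_*$ vanishes, whereas for $i$ with $(v_*)_i=0$ that entry equals $-\mu_i/2\le 0$ and, in this case, coincides with $(Av_*)_i$. Finally, if $v_*>0$ then $Z=\emptyset$, so $\mu=0$ and the stationarity equation collapses to $Av_*=\lambda v_*=(v_*^TAv_*)v_*$, exhibiting $v_*$ as an eigenvector of $A$ with eigenvalue $v_*^TAv_*>0$ (positivity from positive definiteness of $A$ and $v_*\ne 0$).

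The only step that needs genuine care is the constraint-qualification check, since the KKT conditions need not hold at an optimum without it; it becomes short once one notices that $\|v_*\|_2=1$ forces a strictly positive coordinate, which simultaneously separates $v_*$ from the span of $\{e_i\mid i\in Z\}$. The remaining steps are routine linear algebra, and positive definiteness of $A$ is invoked only to sign the eigenvalue in the last clause, not to derive the optimality conditions themselves.
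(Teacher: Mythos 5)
Your proof is correct and takes essentially the same route as the paper: both verify LICQ by observing that $\|v_*\|_2=1$ forces a strictly positive coordinate outside the active set, write the KKT system, identify the equality multiplier as $v_*^TAv_*$, and read the remaining claims off the sign and complementarity of the inequality multipliers. The only (harmless) difference is that you obtain the componentwise refinement directly from complementary slackness applied to $\mu=-2(I-v_*v_*^T)Av_*$, whereas the paper routes it through a description of $\image(I-v_*v_*^T)\cap\mathbb{R}^n_-$; note also that what you prove for $(v_*)_i>0$ is $((I-v_*v_*^T)Av_*)_i=0$, i.e., $(Av_*)_i=(v_*^TAv_*)(v_*)_i$, which is what the proposition evidently intends despite its literal wording $(Av_*)_i=0$.
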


\begin{proof}
Problem~\eqref{prob:sphere1} is equivalent to the following Euclidean optimization problem:
\begin{alignat}{2}
&\text{minimize}&\quad & -v^T A v\nonumber\\
&\text{subject to}&\quad & v \ge 0, \ v^T v = 1, \ v \in \mathbb{R}^{n}.\label{prob:sphere3}
\end{alignat}
Throughout this proof, let~\mbox{$\mathcal{A}(v_*) := \{i_1, i_2, \dots, i_m\} \subset \{1, 2, \dots, n\}$} be the set of indices for the active inequality constraints at $v_*$ among the $n$ constraints~{$v_1 \ge 0$, $v_2 \ge 0$, \dots, $v_n \ge 0$}, and~$\bar{\mathcal{A}}(v_*) := \{1, 2, \dots, n\} - \mathcal{A}(v_*)$ be the complement of $\mathcal{A}(v_*)$ in $\{1, 2, \dots, n\}$, i.e., \begin{equation}
\label{eq:zeros}
    (v_*)_{i_1} = (v_*)_{i_2} = \dots = (v_*)_{i_m} = 0,
\end{equation}
and $(v_*)_i \neq 0$ for all $i \in \bar{\mathcal{A}}(v_*)$.
Then, letting $e_i \in \mathbb{R}^n$ be the vector whose $i$th element is $1$ and the others are $0$, the gradients of the $m$ functions defining the active inequality constraints are $e_{i_1}$, $e_{i_2}$, \dots, $e_{i_m}$.
Since $v_*^T v_* = 1$, $v_*$ is not $0$; and $\bar{\mathcal{A}}(v_*)$ is not empty, i.e, there exists~$i_0 \neq i_1, i_2, \dots, i_m$ such that $(v_*)_{i_0} \neq 0$.
Hence, the gradient of the equality constraint function $v^T v - 1$ at $v_*$, which is $2v_*$, and~\mbox{$e_{i_1}$, $e_{i_2}$, \dots, $e_{i_m}$} are linearly independent.
This means that the linear independent constraint qualification (LICQ)~\cite{nocedal2006numerical} holds at $v_*$, and the KKT conditions for~\eqref{prob:sphere3} are necessary optimality conditions.

Writing the KKT conditions explicitly, there exist $\lambda \in \mathbb{R}^n$ and $\mu \in \mathbb{R}$ such that
\begin{align}
    -2Av_* - \lambda + 2\mu v_* = 0,\label{eq:KKT1}\\*
    v_* \ge 0,\label{eq:v}\\*
    v_*^T v_* = 1,\label{eq:vv}\\*
    \lambda \ge 0,\label{eq:lambda}\\*
    \lambda \odot v_* = 0.\label{eq:complementary}
\end{align}
Under~\eqref{eq:v} and~\eqref{eq:lambda}, Eq.~\eqref{eq:complementary} is equivalent to the condition $\lambda^T v_* = 0$.
Using this and~\eqref{eq:vv}, and multiplying~\eqref{eq:KKT1} by $v_*^T$ from the left, we obtain $\mu = v_*^T Av_*$.
Therefore, \eqref{eq:KKT1} yields that~\mbox{$\lambda = 2((v_*^T Av_*)I-A)v_* \ge 0$}, which implies $(I-v_*v_*^T)Av_* \le 0$.
Thus, the conditions~\eqref{eq:KKT_v} are verified to hold.

Here, $I-v_*v_*^T$ is the orthogonal projection matrix to the orthogonal complement of the span of $v_* \ge 0$ with respect to the $2$-norm.
Therefore, from~\eqref{eq:zeros}, the intersection of the image~$\image(I-v_*v_*^T)$ of $I-v_*v_*^T$ and the nonpositive orthant~\mbox{$\mathbb{R}^n_- := \{x \in \mathbb{R}^n \mid x \le 0\}$} is
\begin{equation}
\label{eq:intersection}
    \image(I-v_*v_*^T) \cap \mathbb{R}^n_- = \{x = (x_i) \in \mathbb{R}^n_- \mid x_i = 0, \ i \in \bar{\mathcal{A}}(v_*)\}.
\end{equation}
It follows from~\eqref{eq:KKT_v} and~\eqref{eq:intersection} that the $i$th element of $(I-v_*v_*^T)Av_*$ is 
\begin{equation}
\label{eq:cases}
    ((I-v_*v_*^T)Av_*)_i
    \begin{cases}
        = 0 \quad \text{if $i \in \bar{\mathcal{A}}(v_*)$},\\
        \le 0 \quad \text{if $i \in \mathcal{A}(v_*)$}.
    \end{cases}
\end{equation}
Rewriting this, we obtain the relations $(Av_*)_i = (v_*^T Av_*)(v_*)_i$ if $i \in \bar{\mathcal{A}}(v_*)$, i.e., $(v_*)_i > 0$, and~\mbox{$(Av_*)_i \le (v_*^T Av_*)(v_*)_i = 0$} if $i \in \mathcal{A}(v_*)$, i.e., $(v_*)_i = 0$.

In particular, if $v_* > 0$, then $\mathcal{A}(v_*) = \emptyset$, and $(Av_*)_i = (v_*^T Av_*)(v_*)_i$ for all~\mbox{$i \in \{1, 2, \dots, n\}$}, which is equivalent to $Av_* = (v_*^T Av_*)v_*$.
This completes the proof.
\end{proof}

\begin{remark}
Conversely, it can be readily checked that if $v_* \in \mathbb{R}^n$ satisfies the conditions~\eqref{eq:KKT_v}, then $v_*$, $\lambda = 2((v_*^T Av_*)I-A)v_*$, and~\mbox{$\mu = v_*^T Av_*$} satisfy the KKT conditions~\eqref{eq:KKT1}--\eqref{eq:complementary}.
In summary, there exist $\lambda$ and $\mu$ such that the KKT conditions~\eqref{eq:KKT1}--\eqref{eq:complementary} are satisfied if and only if $v_*$ satisfies~\eqref{eq:KKT_v}.
\end{remark}

\begin{remark}
\label{rem:4}
From the last statement of Proposition~\ref{prop:PCA_KKT}, if there does not exist an eigenvector~$v$ associated with the largest eigenvalue of $A$ such that $v > 0$, then at least one inequality constraint is active at an optimal solution $v_*$ to Problem~\eqref{prob:sphere1}, i.e., $v_*$ contains at least one zero element.
\end{remark}

If $v_*$ is an optimal solution to Problem~\eqref{prob:sphere1}, $x_*$ satisfying $v_* = x_*^2$ is an optimal solution to Problem~\eqref{prob:sphere2}.
Therefore, such $x_*$ satisfies $\grad f(x_*) = 0$ on $S^{n-1}_4$.
More generally, as the following proposition claims, if $v_*$ satisfies the first-order necessary conditions~\eqref{eq:KKT_v} for Problem~\eqref{prob:sphere1}, then $x_*$ that satisfies~$v_* = x_*^2$ is a critical point of $f$ in~\eqref{prob:sphere2}.

\begin{proposition}
Consider Problem~\eqref{prob:sphere2} with an $n \times n$ symmetric positive definite matrix $A$.
The gradient of the objective function $f$ on $S^{n-1}_4$ satisfies
\begin{equation}
\label{eq:grad}
    \grad f(x) = -4\left((Ax^2)\odot x - \frac{(x^4)^T Ax^2}{\|x^3\|_2^2}x^3\right)
\end{equation}
for any $x \in S^{n-1}_4$.
Furthermore, if $v_* \in S^{n-1}_2$ satisfies~\eqref{eq:KKT_v} and $x_* \in S^{n-1}_4$ satisfies~$x_* = v_*^2$, then $\grad f(x_*) = 0$.
\end{proposition}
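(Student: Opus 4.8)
The plan is to establish the two assertions in turn. For the gradient formula~\eqref{eq:grad}, I would start from the natural smooth extension $\bar f\colon\mathbb{R}^n\to\mathbb{R}$, $\bar f(x)=-(x^2)^T A(x^2)$, and compute its Euclidean gradient by the chain rule. Writing $u:=x^2$ and using $A=A^T$, one gets $\partial\bar f/\partial x_i=-4(Ax^2)_i\,x_i$, hence $\nabla\bar f(x)=-4\,(Ax^2)\odot x$. Since $S^{n-1}_4$ is a Riemannian submanifold of $\mathbb{R}^n$, $\grad f(x)=P_x(\nabla\bar f(x))$ with $P_x$ the orthogonal projection~\eqref{eq:proj} taken at $p=4$. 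The one simplification to notice is that for $p=4$ one has $\sgn(x)\odot\lvert x\rvert^{3}=x^{3}$ (the element-wise cube) and $\|\lvert x\rvert^{3}\|_2^2=\|x^{3}\|_2^2$, so $P_x(d)=d-\dfrac{(x^{3})^T d}{\|x^{3}\|_2^2}\,x^{3}$; substituting $d=-4(Ax^2)\odot x$ and using the identity $a^T(b\odot c)=(a\odot b)^T c$ to rewrite $(x^{3})^T\bigl((Ax^2)\odot x\bigr)=(x^{4})^T Ax^2$ yields~\eqref{eq:grad} after collecting the factor $-4$.

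For the second assertion I would first extract the relevant consequence of~\eqref{eq:KKT_v}. Set $\mu:=v_*^T A v_*$. Left-multiplying $(I-v_*v_*^T)Av_*\le 0$ by $v_*^T$ gives $v_*^T(I-v_*v_*^T)Av_*=0$ (using $v_*^T v_*=1$), so the nonpositive entries of $(I-v_*v_*^T)Av_*$, weighted by the nonnegative entries of $v_*$, sum to zero and hence vanish individually; thus $(Av_*)_i=\mu(v_*)_i$ whenever $(v_*)_i>0$, and $(Av_*)_i\le 0$ whenever $(v_*)_i=0$, as in Proposition~\ref{prop:PCA_KKT}. Now invoke the relation $x_*^{2}=v_*$, which forces $(x_*)_i=0$ exactly when $(v_*)_i=0$. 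Then componentwise: for $(v_*)_i>0$, $\bigl((Ax_*^2)\odot x_*\bigr)_i=(Av_*)_i(x_*)_i=\mu(v_*)_i(x_*)_i=\mu(x_*)_i^{3}$; for $(v_*)_i=0$, $(x_*)_i=0$ so both $\bigl((Ax_*^2)\odot x_*\bigr)_i$ and $(x_*^{3})_i$ vanish. Hence $(Ax_*^2)\odot x_*=\mu\,x_*^{3}$, i.e.\ $\nabla\bar f(x_*)=-4\mu\,x_*^{3}$.

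Finally, $x_*^{3}=\sgn(x_*)\odot\lvert x_*\rvert^{3}$ spans $N_{x_*}S^{n-1}_4$ by~\eqref{eq:normal}, so $P_{x_*}(x_*^{3})=0$, and therefore
\[
\grad f(x_*)=P_{x_*}\bigl(\nabla\bar f(x_*)\bigr)=-4\mu\,P_{x_*}(x_*^{3})=0 .
\]
(Equivalently, one may plug $(Ax_*^2)\odot x_*=\mu x_*^{3}$ and the companion identity $(x_*^{4})^T Ax_*^2=\mu\|x_*^{3}\|_2^2$ directly into~\eqref{eq:grad}.) I expect the main obstacle to be bookkeeping rather than depth: keeping the Hadamard-product manipulations straight, and---the one genuinely delicate point---handling the case split into indices with $(v_*)_i>0$ versus $(v_*)_i=0$, reconciled through $x_*^{2}=v_*$. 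The conceptual reason the projection annihilates $\nabla\bar f(x_*)$ is precisely the KKT relation $(Ax_*^2)\odot x_*=\mu x_*^{3}$: at such $x_*$ the Euclidean gradient already points along the normal direction $\sgn(x_*)\odot\lvert x_*\rvert^{3}$.
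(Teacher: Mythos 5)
Your proposal is correct and follows essentially the same route as the paper: compute $\nabla\bar f(x)=-4(Ax^2)\odot x$, project with $P_x$ using $\sgn(x)\odot\lvert x\rvert^3=x^3$, and then show via the consequence $(Av_*)_i=(v_*^TAv_*)(v_*)_i$ on the support of $v_*$ that the Euclidean gradient at $x_*$ is normal to $S^{n-1}_4$. The only cosmetic difference is that you handle the passage from the relation in $v_*=x_*^2$ to the relation in $x_*$ by a componentwise case split on $(v_*)_i>0$ versus $(v_*)_i=0$, whereas the paper cancels a factor of $x_*$ via the identity ``$ac^2=bc^4\iff ac=bc^3$''; both are valid.
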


\begin{proof}
We first derive Eq.~\eqref{eq:grad} for $\grad f$.
Let $\bar{f}(x) := -(x^2)^T A (x^2)$ in $\mathbb{R}^n$, which is a smooth extension of $f$ to $\mathbb{R}^n$.
For any $d \in \mathbb{R}^n$, the directional derivative of $\bar{f}$ at $x$ in the direction of $d$ is computed as
\begin{equation}
    \D \bar{f}(x)[d] = -4(x^2)^T A(x \odot d) = -4 (Ax^2)^T (x\odot d) = -4 ((Ax^2) \odot x)^T d.
\end{equation}
Hence, we obtain $\nabla \bar{f}(x) = -4(Ax^2) \odot x$.
The Riemannian gradient $\grad f$ is then obtained by using the orthogonal projection~\eqref{eq:proj} as
\begin{align}
    \grad f(x) &= P_x(\nabla \bar{f}(x))\\
    &= -4 \left(I- \frac{(\sgn(x) \odot \lvert x\rvert^3)(\sgn(x) \odot \lvert x\rvert^3)^T}{\|x^3\|_2^2}\right)((A x^2) \odot x) \\
    &= -4\left((Ax^2)\odot x - \frac{(x^4)^T Ax^2}{\|x^3\|_2^2}x^3\right),
\end{align}
where we used $\sgn(x) \odot \lvert x\rvert^3 = x \odot \lvert x\rvert^2 = x^3$.
Thus, \eqref{eq:grad} is proved.

Subsequently, we assume that $v_*$ satisfies~\eqref{eq:KKT_v} and $x_*$ satisfies $v_* = x_*^2$.
As in the proof of Proposition~\ref{prop:PCA_KKT}, let $\mathcal{A}(v_*) = \{i_1, i_2, \dots, i_m\} \subset \{1, 2, \dots, n\}$ be the set of indices such that~\mbox{$(v_*)_{i_1} = (v_*)_{i_2} = \dots = (v_*)_{i_m} = 0$} holds and $\bar{\mathcal{A}}(v_*) := \{1, 2, \dots, n\} - \mathcal{A}(v_*)$.
Defining~\mbox{$\mu := v_*^T Av_*$}, it follows from~\eqref{eq:cases} that
\begin{equation}
\label{eq:v2}
    (v_*^2)^T Av_*
    = \sum_{i \in \bar{\mathcal{A}}(v_*)} (v_*)_{i}^2 (Av_*)_{i}
    = \sum_{i \in \bar{\mathcal{A}}(v_*)} (v_*)_{i}^2 \mu (v_*)_{i}
    = \mu \sum_{i \in \bar{\mathcal{A}}(v_*)} (v_*)_{i}^3
    = \mu \|v_*\|_3^3.
\end{equation}
Here, from~\eqref{eq:cases}, we have $((I-v_*v_*^T)Av_*) \odot v_* = 0$, which, together with \mbox{$v_* \neq 0$} and~\eqref{eq:v2}, yields $(Av_*) \odot v_* = (v_*v_*^T Av_*) \odot v_*
= \mu v_*^2
= ((v_*^2)^T Av_* / \|v_*\|_3^3)v_*^2$.
Substituting $v_* = x_*^2$, this is written as
\begin{equation}
\label{eq:24}
    (Ax_*^2) \odot x_*^2 = \frac{(x_*^4)^T A(x_*^2)}{\|x_*^2\|_3^3}x_*^4
    =\frac{(x_*^4)^T A(x_*^2)}{\|x_*^3\|_2^2}x_*^4.
\end{equation}
In general, for any $a, b, c \in \mathbb{R}$, $ac^2 = bc^4$ is equivalent to $ac = bc^3$.
Therefore, \eqref{eq:24} is reduced to
\begin{equation}
    (Ax_*^2) \odot x_* = \frac{(x_*^4)^T A(x_*^2)}{\|x_*^3\|_2^2}x_*^3,
\end{equation}
which shows that $\grad f(x_*) = 0$ in light of~\eqref{eq:grad}.
This completes the proof.
\end{proof}

\subsubsection{Numerical experiments for nonnegative PCA}
Here, we demonstrate numerical experiments for the nonnegative PCA.
To solve the constrained Problem~\eqref{prob:sphere1} on $S^{n-1}_2$,
we solve the unconstrained Problem~\eqref{prob:sphere2} on $S^{n-1}_4$ to obtain~\mbox{$x^{\proposed}_n \in S^{n-1}_4$}.
Then, we obtain~\mbox{$v^{\proposed}_n := (x^{\proposed}_n)^2$} as a solution to the original Problem~\eqref{prob:sphere1} based on the proposed framework.
For comparison, we also solve the constrained Euclidean optimization Problem~\eqref{prob:sphere3}, which is equivalent to Problem~\eqref{prob:sphere1}, using MATLAB's \texttt{fmincon} function, to obtain $v^{\fmincon}_n$.

We consider the two cases of $n = 10$ and $n = 1000$.
For each $n$, we constructed an $n \times n$ symmetric positive definite matrix $A$ with randomly generated elements.
Implementing the orthogonal projection~\eqref{eq:proj} and retraction~\eqref{eq:retraction} based on Manopt~\cite{boumal2014manopt}, we applied the Riemannian conjugate gradient method on $S^{n-1}_4$ to Problem~\eqref{prob:sphere2} with $n = 10$ and $n = 1000$.
The initial point $x_0$ for solving Problem~\eqref{prob:sphere2} was also randomly constructed, and we used $v_0 := x_0^2$ as the initial point for solving Problem~\eqref{prob:sphere3} by \texttt{fmincon}.

For $n = 10$, each elements of the two solutions~$v^{\proposed}_{10}$ and $v^{\fmincon}_{10}$ are the same to the third decimal place, as $(0.000,0.604,0.000,0.000,0.000,0.000,0.000,0.000,0.116,0.788)^T$.
Furthermore, they are sparse.
This is consistent with the discussion in Remark~\ref{rem:4}.

Subsequently, for $n = 1000$, we have~\mbox{$\|v^{\proposed}_{1000} - v^{\fmincon}_{1000}\|_2 = 1.307$}.
Furthermore, the values of the function~\mbox{$g(v) := -v^T Av$}, which should be minimized in Problems~\eqref{prob:sphere1} and~\eqref{prob:sphere3}, are~\mbox{$g(v^{\proposed}_{1000}) = -2.963 \times 10^{3}
    < -1.805 \times 10^{3}
    =g(v^{\fmincon}_{1000}).$}
In addition,~$v^{\proposed}_{1000}$ is sparse because $479$ of $1000$ elements of $v^{\proposed}_{1000}$ are less than $10^{-6}$, while no element of $v^{\fmincon}_{1000}$ is less than $10^{-6}$. 
Therefore, the proposed framework yielded a much better solution in this case.

\subsection{$L_p$-regularization-related optimization}
In certain applications, $L_p$ regularization is a frequently used technique, which considers an objective function as the weighted sum of the original objective function and the $p$-norm of the decision variable vector.
In particular, $L_1$ regularization is used in Lasso for sparse estimation~\cite{hastie2015statistical}.

\subsubsection{Relationship between regularized, constrained, and manifold optimization problems}
\label{subsubsec:relation}
We consider the following regularized optimization problem with~\mbox{$p \in [1, \infty]$}:
\begin{alignat}{2}
&\text{minimize}&\quad & L(w) + \lambda \|w\|_p\nonumber\\
&\text{subject to}&\quad & w \in \mathbb{R}^n,\label{prob:regularization}
\end{alignat}
where $L \colon \mathbb{R}^n \to \mathbb{R}$ is a convex function, and $\lambda \ge 0$ is a predefined constant called a regularization parameter.

Intuitively, $L_p$ regularization is closely related to considering the constraint that the $p$-norm of the decision variable vector is not larger than a predefined nonnegative constant.
Specifically, the corresponding constrained optimization problem is written as follows:
\begin{alignat}{2}
&\text{minimize}&\quad & L(w)\nonumber\\
&\text{subject to}&\quad & \|w\|_p \le C, \ w \in \mathbb{R}^n,\label{prob:constrained}
\end{alignat}
where $C \ge 0$ is a constant.
For example, while Lasso regression is performed by solving the former unconstrained Problem~\eqref{prob:regularization},
the latter Problem~\eqref{prob:constrained} is sometimes used to explain why 
Lasso tends to find a sparse solution~\cite{hastie2015statistical}.
This intuition is justified even for general $p \in [1, \infty]$ through the following proposition:

\begin{proposition}
\label{prop:regularization_constrained}
Assume that $p \in [1, \infty]$, and let $L \colon \mathbb{R}^n \to \mathbb{R}$ be a convex function.
If~$w_*$ is an optimal solution to Problem~\eqref{prob:regularization} with a predefined constant $\lambda \ge 0$,
then there exists $C \ge 0$ such that $w_*$ is an optimal solution to Problem~\eqref{prob:constrained} with $C$.
Conversely, if $w_*$ is an optimal solution to Problem~\eqref{prob:constrained} with a predefined constant~$C \ge 0$, then there exists $\lambda \ge 0$ such that~$w_*$ is an optimal solution to Problem~\eqref{prob:regularization}.
\end{proposition}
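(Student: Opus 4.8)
The plan is to prove the two directions separately, using standard convex-duality / Lagrangian reasoning, and exploiting convexity of both $L$ and $w \mapsto \|w\|_p$ on all of $\mathbb{R}^n$.

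\emph{From regularized to constrained.} Suppose $w_*$ solves Problem~\eqref{prob:regularization} for a given $\lambda \ge 0$. If $\lambda = 0$, then $w_*$ minimizes $L$ over $\mathbb{R}^n$, and any $C \ge \|w_*\|_p$ makes $w_*$ feasible and optimal for Problem~\eqref{prob:constrained}; so assume $\lambda > 0$. Set $C := \|w_*\|_p$, so $w_*$ is feasible for Problem~\eqref{prob:constrained}. I would argue by contradiction: if some $w'$ with $\|w'\|_p \le C$ had $L(w') < L(w_*)$, then
\begin{equation*}
L(w') + \lambda \|w'\|_p < L(w_*) + \lambda C = L(w_*) + \lambda \|w_*\|_p,
\end{equation*}
contradicting optimality of $w_*$ for Problem~\eqref{prob:regularization}. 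Hence $w_*$ is optimal for Problem~\eqref{prob:constrained} with this $C$. Note this direction does not even need convexity.

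\emph{From constrained to regularized.} This is the direction where I expect the real work, and where convexity is essential. Suppose $w_*$ solves Problem~\eqref{prob:constrained} for a given $C \ge 0$. The natural approach is Lagrangian duality: the constraint $\|w\|_p - C \le 0$ defines a convex feasible set, the objective $L$ is convex, and Slater's condition holds trivially (take $w = 0$ when $C > 0$; the case $C = 0$ forces $w_* = 0$ and one takes any sufficiently large $\lambda$, or handles it directly since the feasible set is the single point $0$). By strong duality and existence of an optimal dual multiplier $\lambda \ge 0$, the KKT conditions give $0 \in \partial L(w_*) + \lambda\, \partial(\|\cdot\|_p)(w_*)$ together with complementary slackness $\lambda(\|w_*\|_p - C) = 0$. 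I would then show that this same $\lambda$ certifies optimality of $w_*$ for the unconstrained Problem~\eqref{prob:regularization}: the function $w \mapsto L(w) + \lambda\|w\|_p$ is convex, and $0$ lies in its subdifferential at $w_*$ (being the sum of the subdifferentials, since $\|\cdot\|_p$ is finite everywhere so the subdifferential sum rule applies without a constraint qualification), hence $w_*$ is a global minimizer.

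\emph{Main obstacle.} The conceptual content is routine convex analysis, so the main care-points are bookkeeping rather than depth: (i) handling the degenerate endpoints $\lambda = 0$ and $C = 0$ cleanly, since Slater fails when $C = 0$; and (ii) if one prefers to avoid invoking subgradient calculus explicitly, one can instead run the same contradiction argument as in the first direction — given the KKT multiplier $\lambda$, for any $w' \in \mathbb{R}^n$ one has $L(w') \ge L(w_*) + \lambda(\|w_*\|_p - \|w'\|_p)$ by using that $w_*$ minimizes the Lagrangian, and rearranging gives $L(w') + \lambda\|w'\|_p \ge L(w_*) + \lambda\|w_*\|_p$. I would lean toward presenting it this elementary way to keep the proof self-contained and to sidestep quoting a subdifferential sum rule; the only genuinely indispensable external fact is existence of a finite optimal Lagrange multiplier for a convex program satisfying Slater's condition, which I would cite from~\cite{boyd2004convex}.
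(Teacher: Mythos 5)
Your proposal is correct and follows essentially the same route as the paper: the forward direction takes $C := \|w_*\|_p$ and uses feasibility, and the converse invokes Slater's condition and strong duality to obtain an optimal dual multiplier with which $w_*$ minimizes the Lagrangian, hence also $L(w)+\lambda\|w\|_p$. The elementary rearrangement you say you would prefer over the subdifferential sum rule is exactly the argument the paper gives.
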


\begin{proof}
First, we fix $\lambda \ge 0$ and let $w_*$ be an optimal solution to Problem~\eqref{prob:regularization}.
Then, for any~$w \in \mathbb{R}^n$, we have
\begin{equation}
\label{eq:Lasso1}
L(w_*) + \lambda \|w_*\|_p \le L(w) + \lambda \|w\|_p.
\end{equation}
We show that $w_*$ is an optimal solution to Problem~\eqref{prob:constrained} with~\mbox{$C := \|w_*\|_p$}.
For any feasible solution $w \in \mathbb{R}^n$ to Problem~\eqref{prob:constrained}, we have
$\|w\|_p \le C = \|w_*\|_p$.
Combining this and~\eqref{eq:Lasso1}, we have $L(w_*) + \lambda \|w_*\|_p \le L(w) + \lambda \|w_*\|_p$, which means $L(w_*) \le L(w)$.
Furthermore, $w_*$ is clearly a feasible solution to Problem~\eqref{prob:constrained} since $\|w_*\|_p = C$.
Therefore, $w_*$ is an optimal solution to Problem~\eqref{prob:constrained}.

Conversely, we fix $C \ge 0$ and let $w_*$ be an optimal solution to Problem~\eqref{prob:constrained}.
Here, we additionally consider the Lagrange dual problem of~\eqref{prob:constrained}:
\begin{alignat}{2}
&\text{maximize}&\quad & \inf_{w \in \mathbb{R}^n} (L(w) + \mu(\|w\|_p - C))\nonumber\\
&\text{subject to}&\quad & \mu \ge 0, \ \mu \in \mathbb{R}.\label{prob:dual}
\end{alignat}
If $C > 0$, then Slater's condition for Problem~\eqref{prob:constrained}, which is that there exists~$w \in \mathbb{R}^n$ with~$\|w\|_p < C$, clearly holds with $w = 0$.
If $C = 0$, then the constraint $\|w\|_p \le C$ in Problem~\eqref{prob:constrained} is rewritten as the equality constraint $w = 0$, and Slater's condition (which in this case is that a feasible solution exists) holds by taking $w = 0$.
In each case, Slater's condition for Problem~\eqref{prob:constrained} holds.
Furthermore, Problem~\eqref{prob:constrained} is a convex optimization problem.
Therefore, it follows from Slater's theorem~\cite[Section 5.2.3]{boyd2004convex} that strong duality holds.
Hence, the optimal value $L(w_*)$ of Problem~\eqref{prob:constrained} and the optimal value of the dual problem~\eqref{prob:dual} coincide.
Letting~$\mu_* \ge 0$ be an optimal solution to~\eqref{prob:dual}, we have
\begin{equation}
\label{eq:Lasso3}
    L(w_*) = \inf_{w \in \mathbb{R}^n}(L(w) + \mu_*(\|w\|_p - C)).
\end{equation}
Since $\|w_*\|_p \le C$ and $\mu_* \ge 0$, we obtain $L(w_*) \le L(w_*) + \mu_*(\|w_*\|_p - C) \le L(w_*)$.
Thus,~$L(w_*) = L(w_*) + \mu_*(\|w_*\|_p - C)$ holds, and $w = w_*$ attains the minimum value of $L(w) + \mu_*(\|w\|_p - C)$ over all $w \in \mathbb{R}^n$.
Since $\mu_* C$ is a constant, $w = w_*$ also attains the minimum value of $L(w) + \mu_*\|w\|_p$ over $\mathbb{R}^n$.
This implies that $w_*$ is an optimal solution to Problem~\eqref{prob:regularization} with $\lambda = \mu_*$, thereby completing the proof.
\end{proof}

\begin{remark}
Although we focus on the $p$-norm here, Proposition~\ref{prop:regularization_constrained} can be straightforwardly generalized to the case with a general norm in $\mathbb{R}^n$.
Indeed, in the proof of Proposition~\ref{prop:regularization_constrained}, we do not exploit any specific property of the $p$-norm but properties of a general norm.
\end{remark}

From Proposition~\ref{prop:regularization_constrained}, we can observe the importance of Problem~\eqref{prob:constrained} in dealing with Problem~\eqref{prob:regularization}.
Furthermore, Problem~\eqref{prob:constrained} is closely related to the following problem:
\begin{alignat}{2}
&\text{minimize}&\quad & L(w)\nonumber\\*
&\text{subject to}&\quad & \|w\|_p = C, \ w \in \mathbb{R}^n,\label{prob:equality}
\end{alignat}
where $C$ is the constant in Problem~\eqref{prob:constrained}.
Indeed, if a minimum point $w_*$ of~$L$ over the entire~$\mathbb{R}^n$ lies in the ball $\{w \in \mathbb{R}^n \mid \|w\|_p \le C\}$, then $w_*$ is also an optimal solution to~\eqref{prob:constrained}.
Therefore, a practically more important case we focus on is when all minimum points of $L$ over $\mathbb{R}^n$ are outside the ball.
In this case, we can show that there exists an optimal solution to Problem~\eqref{prob:constrained} that is on the sphere $\{w \in \mathbb{R}^n \mid \|w\|_p = C\}$ as the following proposition:

\begin{proposition}
Assume $p \in [1, \infty]$, let $L \colon \mathbb{R}^n \to \mathbb{R}$ be convex, and consider Problem~\eqref{prob:constrained} with a constant $C \ge 0$.
Assume that any minimum point $y_*$ of $L$ over~$\mathbb{R}^n$ satisfies $\|y_*\|_p > C$.
Then, there exists an optimal solution $w_*$ to Problem~\eqref{prob:constrained} that satisfies~$\|w_*\|_p = C$.
\end{proposition}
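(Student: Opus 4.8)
The plan is to first secure an optimal solution and then rule out that it can lie strictly inside the $p$-ball. Write $K := \{w \in \mathbb{R}^n \mid \|w\|_p \le C\}$ for the feasible set of Problem~\eqref{prob:constrained}. Since $L$ is a finite-valued convex function on $\mathbb{R}^n$, it is continuous, and $K$ is a scaled closed $p$-ball, hence closed and bounded, hence compact for every $p \in [1,\infty]$ (for $p = \infty$, $K$ is the box $[-C,C]^n$). Therefore $L$ attains its minimum over $K$; fix such an optimal solution $w_*$. Being feasible, $w_*$ satisfies $\|w_*\|_p \le C$, so it suffices to exclude the case $\|w_*\|_p < C$.

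Suppose, toward a contradiction, that $\|w_*\|_p < C$; note that this already forces $C > 0$, since the $p$-norm is nonnegative, so the degenerate case $C = 0$ needs no separate treatment. By continuity of $w \mapsto \|w\|_p$, there is an open Euclidean neighborhood $U$ of $w_*$ with $U \subseteq \{w \in \mathbb{R}^n \mid \|w\|_p < C\} \subseteq K$. Because $w_*$ minimizes $L$ over all of $K$, it minimizes $L$ over the smaller set $U$ as well, so $w_*$ is a local minimizer of $L$ on $\mathbb{R}^n$.

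The crucial step is then the convexity of $L$: a local minimizer of a convex function is automatically a global minimizer (if $L(y) < L(w_*)$ for some $y$, then $L(w_* + t(y - w_*)) \le (1-t)L(w_*) + tL(y) < L(w_*)$ for all sufficiently small $t > 0$, contradicting local minimality). Hence $w_*$ is a minimum point of $L$ over $\mathbb{R}^n$, and the hypothesis of the proposition forces $\|w_*\|_p > C$, contradicting $\|w_*\|_p < C$. Therefore $\|w_*\|_p = C$, and $w_*$ is the desired optimal solution lying on the sphere $\{w \in \mathbb{R}^n \mid \|w\|_p = C\}$.

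As for difficulty, there is no substantial obstacle here. The argument rests only on three standard facts---continuity of a finite-valued convex function on $\mathbb{R}^n$, compactness of the closed $p$-ball (including the box when $p = \infty$), and ``local minimizer $\Rightarrow$ global minimizer'' for convex functions---so the main thing to be careful about is simply invoking them cleanly and noting that the $C = 0$ edge case is subsumed automatically. One subtlety worth flagging is that the hypothesis is also meant to cover the situation where $L$ has no minimum point at all; the proof above handles this without change, since the contradiction it reaches is precisely that $w_*$ \emph{would be} such a minimum point.
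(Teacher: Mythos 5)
Your proof is correct, and it takes a genuinely different route from the paper's. The paper fixes a global minimizer $y_*$ of $L$ (which by hypothesis lies outside the ball) together with an optimal solution $z_*$ of Problem~\eqref{prob:constrained}; if $\|z_*\|_p < C$, it walks along the segment from $z_*$ to $y_*$, using convexity of $L$ to keep the objective value at most $L(z_*)$ and the intermediate value theorem applied to $\alpha \mapsto \|\alpha y_* + (1-\alpha)z_*\|_p$ to land on the sphere, thereby \emph{constructing} a boundary optimizer from an interior one. You instead show that an interior optimal solution cannot exist at all: it would be a local, hence (by convexity) global, minimizer of $L$, contradicting the hypothesis that every such minimizer has $p$-norm exceeding $C$. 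Your version buys three things. First, it does not require a global minimizer of $L$ to exist, so it also covers the case where the hypothesis holds vacuously (e.g., $L$ linear and nonconstant), which the paper's proof does not literally handle since it begins by selecting $y_*$. Second, it explicitly secures the existence of an optimal solution via compactness of the $p$-ball and continuity of the finite-valued convex $L$, a point the paper leaves implicit. Third, it proves the stronger statement that \emph{every} optimal solution lies on the sphere, and in doing so reveals that the case $\|z_*\|_p < C$ treated at length in the paper is in fact empty under the stated hypothesis. The paper's segment argument, on the other hand, would survive a weakening of the hypothesis to ``some minimum point of $L$ lies outside the ball,'' where interior optimizers genuinely can occur; under the hypothesis as actually stated, your shorter argument is the sharper one.
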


\begin{proof}
Let $y_*$ and $z_*$ be a minimum point of $L$ over $\mathbb{R}^n$ and optimal solution to Problem~\eqref{prob:constrained}, respectively.
If $\|z_*\|_p = C$, then we can take $z_*$ as $w_*$ in the statement of the proposition.

In the remainder of the proof, we assume $\|z_*\| < C$.
From the assumption, we have $\|z_*\|_p < C < \|y_*\|_p$ and $L(y_*) \le L(z_*)$.
Since $L$ is convex, for~$\alpha \in [0, 1]$, we have
\begin{equation}
\label{eq:L}
    L(\alpha y_* + (1-\alpha)z_*) \le \alpha L(y_*) + (1-\alpha) L(z_*) \le \alpha L(z_*) + (1-\alpha) L(z_*) = L(z_*).
\end{equation}
Note that the function $\varphi(\alpha) := \|\alpha y_* + (1-\alpha)z_*\|_p$ is continuous with respect to~$\alpha$, where~$\varphi$ satisfies~\mbox{$\varphi(0) = \|z_*\|_p < C$} and $\varphi(1) = \|y_*\|_p > C$.
Therefore, from the intermediate value theorem, there exists $\alpha_* \in (0, 1)$ such that $\varphi(\alpha_*) = C$.
With this $\alpha_*$, defining~\mbox{$w_* := \alpha_* y_* + (1-\alpha_*)z_*$}, we have $\|w_*\|_p = \varphi(\alpha_*) = C$, implying that $w_*$ is feasible for Problem~\eqref{prob:constrained}.
Since $z_*$ is optimal for~\eqref{prob:constrained}, we have $L(z_*) \le L(w_*)$.
On the contrary, Eq.~\eqref{eq:L} yields that $L(w_*) \le L(z_*)$.
Thus, we obtain $L(w_*) = L(z_*)$, which means that $w_*$ is an optimal solution to Problem~\eqref{prob:constrained} with~$\|w_*\|_p = C$.
This completes the proof.
\end{proof}

From this proposition, if no minimum point of $L$ over $\mathbb{R}^n$ lies in the ball~\mbox{$\{w \in \mathbb{R}^n \mid \|w\|_p \le C\}$}, then any optimal solution to Problem~\eqref{prob:equality} is also an optimal solution to Problem~\eqref{prob:constrained}, i.e., it is sufficient to solve Problem~\eqref{prob:equality} for obtaining an optimal solution to Problem~\eqref{prob:constrained}.
Furthermore, upon scaling~$w \mapsto w/C$ and $L \mapsto L \circ CI$ and writing $w/C$ and $L \circ CI$ newly as $x$ and~$f$, respectively, i.e., $f(x) := L(Cx) = L(w)$, Problem~\eqref{prob:equality} essentially becomes equivalent to the following problem on the unit sphere $S^{n-1}_p$ with $p$-norm:
\begin{alignat}{2}
&\text{minimize}&\quad & f(x)\nonumber\\
&\text{subject to}&\quad & x \in S^{n-1}_p.\label{prob:sphere}
\end{alignat}

The important cases $p = 1$ and $p = \infty$ do not lie within the scope of the discussion in the previous sections.
Therefore, we approximate $S^{n-1}_1$ and $S^{n-1}_\infty$ by $S^{n-1}_p$ with $p = 1+\varepsilon$, where~$\varepsilon > 0$ is sufficiently small, and $S^{n-1}_p$ with sufficiently large $p$, respectively.

\subsubsection{Numerical experiment for Lasso regression}
Here, we consider the Lasso regression with simple artificial data.
The data size is set as~$m = 100$ and the number of variables as~\mbox{$n = 13$}.
We construct a data matrix $X \in \mathbb{R}^{m \times n}$ with randomly generated elements, set~\mbox{$w_* = (-5, -4, -3, -2, -1, 1, 2, 3, 4, 5, 0, 0, 0)^T \in \mathbb{R}^n$}, and compute~\mbox{$y = Xw_* + \epsilon$}, where each element of $\epsilon \in \mathbb{R}^n$ is randomly generated from a uniform distribution on the interval $[-1,1]$.
This means that, among $n = 13$ variables, the first $10$ are essential and the last $3$ have no effect in the data $y$.
We now estimate the coefficient parameter vector $w_*$ without any information on it, i.e., by using only the observed data $X$ and $y$.
An appropriate sparse estimation should yield a coefficient parameter vector whose last $3$ elements are close to $0$.

With the data $X$ and $y$, we consider Problem~\eqref{prob:equality} with $L(w) := \|Xw - y\|_2^2$ and a constant~$C > 0$, i.e., with the equality constraint $\|w\|_p = C$.
Note that we exclude the case when $C = 0$ since it yields the trivial solution $w = 0$.
Solving this problem is equivalent to minimizing $f(x) := L(Cx) = \|CXx - y\|_2^2$ with respect to $x \in S^{n-1}_p$, i.e., solving Problem~\eqref{prob:sphere}, and multiplying the resultant solution $x_*$ by $C$ to obtain the solution $w_* = Cx_*$ to Problem~\eqref{prob:equality}.

The case of $p = 1$ corresponds to the Lasso regression.
However, we can handle $S^{n-1}_p$ with $p > 1$ using the Riemannian optimization techniques developed in the previous sections.
Therefore, we adopt $p = 1.000001 = 1 + 10^{-6}$ and expect that solving the problem on $S^{n-1}_p$ yields a sparse solution.
Implementing the projection~\eqref{eq:proj} and retraction~\eqref{eq:retraction} based on Manopt, we applied the Riemannian conjugate gradient method for Problem~\eqref{prob:sphere} on~$S^{n-1}_p$.

In Table~\ref{tab2}, $w^{\nonreg} := (X^T X)^{-1}X^T y$ is the solution to the nonregularized optimization problem of minimizing $L$, i.e., Problem~\eqref{prob:regularization} with $\lambda = 0$.
As expected, this is not sparse.
Then, we applied the Riemannian conjugate gradient method in the proposed framework with several $C$ and obtained the solution $w^{\proposed}_C$ to Problem~\eqref{prob:equality} for each $C$.
The results for~\mbox{$C = 1, 5, 10, 20, 22, 25, 30, 50, 100$} are shown in the table.
For small $C$ such as~\mbox{$C = 1, 5, 10$}, the resultant solutions are sparse but do not provide a good estimation because the $5$th and $6$th entries are almost zero and the~$11$th and~$13$th are nonzero.
On the contrary, large $C$ does not contribute to sparse estimation at all.
Although finding the best value of $C$ is difficult, we observe that the case of $C = 22$ yields an appropriate solution in this experiment, which is a sparse solution with appropriate values.

For comparison, we also applied MATLAB's \texttt{lasso} function, which successively increases the value of $\lambda$ and solves Problem~\eqref{prob:regularization} for each $\lambda$.
For small~$\lambda$'s, the corresponding solutions are dense, whereas the solution is $0$ for a sufficiently large $\lambda$.
We focus on the $\lambda$'s and corresponding solutions $w^{\Lasso}_{\lambda} \in \mathbb{R}^n$ such that only the last $3$ elements of $w^{\Lasso}_{\lambda}$ are $0$.
The \texttt{lasso} function yielded several~$\lambda$'s satisfying this condition.
Among them, $w^{\Lasso}_{0.029}$ and $w^{\Lasso}_{0.746}$ correspond to the smallest and largest values of $\lambda$, respectively.
We can observe that $w^{\proposed}_{22}$ and $w^{\Lasso}_{0.746}$ are close to each other.

\begin{landscape}
\begin{table}
\begin{center}
\caption{Results obtained upon solving the Lasso-related optimization problems. The $i$th row shows the $i$th element of each solution.}\label{tab2}
\begin{tabular}{lccccccccccccc}
\toprule
 & $1$ & $2$ & $3$ & $4$ & $5$ & $6$ & $7$ & $8$ & $9$ & $10$ & $11$ & $12$ & $13$\\
\midrule
$w^{\nonreg}$ & $-5.055$ & $-3.904$ & $-3.022$ & $-2.039$ & $-1.036$ & $0.967$ & $1.972$ & $3.028$ & $4.036$ & $5.060$ & $-0.008$ & $-0.032$ & $0.052$ \\
\midrule
$w^{\proposed}_{1}$ & $-0.167$ & $-0.081$ & $-0.150$ & $-0.095$ & $0.000$ & $0.000$ & $0.137$ & $0.000$ & $0.048$ & $0.257$ & $0.005$ & $-0.000$ & $-0.059$\\
$w^{\proposed}_{5}$ & $-0.874$ & $-0.631$ & $-0.727$ & $-0.329$ & $-0.000$ & $0.003$ & $0.685$ & $0.044$ & $0.370$ & $1.334$ & $0.004$ & $-0.000$ & $-0.000$\\
$w^{\proposed}_{10}$ & $-1.683$ & $-1.335$ & $-1.359$ & $-0.709$ & $0.000$ & $0.002$ & $1.051$ & $0.537$ & $0.984$ & $2.268$ & $0.000$ & $0.000$ & $-0.071$\\
$w^{\proposed}_{20}$ & $-3.357$ & $-2.790$ & $-2.452$ & $-1.330$ & $-0.048$ & $0.255$ & $1.564$ & $1.765$ & $2.437$ & $3.907$ & $0.076$ & $-0.000$ & $-0.018$\\
$w^{\proposed}_{22}$ & $-3.779$ & $-3.234$ & $-2.537$ & $-1.202$ & $-0.119$ & $0.294$ & $1.819$ & $1.914$ & $2.829$ & $4.272$ & $0.000$ & $0.000$ & $-0.000$\\
$w^{\proposed}_{25}$ & $-4.193$ & $-3.422$ & $-2.791$ & $-1.599$ & $-0.510$ & $0.587$ & $1.787$ & $2.391$ & $3.203$ & $4.504$ & $0.008$ & $0.004$ & $0.000$\\
$w^{\proposed}_{30}$ & $-5.027$ & $-3.895$ & $-3.014$ & $-2.016$ & $-1.021$ & $0.952$ & $1.967$ & $3.010$ & $4.012$ & $5.042$ & $0.000$ & $-0.013$ & $0.030$\\
$w^{\proposed}_{50}$ & $-8.040$ & $-5.762$ & $-4.521$ & $-3.657$ & $-2.872$ & $-0.979$ & $-0.608$ & $5.474$ & $6.346$ & $7.002$ & $-2.290$ & $0.979$ & $-1.471$\\
$w^{\proposed}_{100}$ & $-15.03$ & $0.127$ & $-10.70$ & $-9.352$ & $-5.748$ & $-7.317$ & $-7.698$ & $11.94$ & $0.913$ & $12.70$ & $-7.959$ & $4.992$ & $-5.526$\\
\midrule
$w^{\Lasso}_{0.029}$ & $-4.989$ & $-3.881$ & $-3.023$ & $-1.986$ & $-0.993$ & $0.939$ & $1.959$ & $2.976$ & $3.981$ & $5.037$ & $0$ & $0$ & $0$\\
$w^{\Lasso}_{0.746}$ & $-3.727$ & $-3.212$ & $-2.712$ & $-1.192$ & $-0.002$ & $0.224$ & $1.831$ & $1.890$ & $2.791$ & $4.314$ & $0$ & $0$ & $0$\\
\end{tabular}
\end{center}
\end{table}
\end{landscape}

\subsubsection{Numerical experiment for box-constrained problem}
Here, we consider the following box-constrained optimization problem:
\begin{alignat}{2}
&\text{minimize}&\quad & L(w)\nonumber\\
&\text{subject to}&\quad & l \le w \le u, \ w \in \mathbb{R}^n,\label{prob:box}
\end{alignat}
where $l = (l_i),\, u = (u_i) \in \mathbb{R}^n$ are given constant vectors with $l < u$.\footnote{If $l_i = u_i$ for some $i$, then the constant $l_i$ is the only value that the corresponding $w_i$ can take. By eliminating such a constant variable in advance if necessary, we can assume $l < u$ without loss of generality.}
The constraint $l \le w \le u$ means the box constraint $l_i \le w_i \le u_i$ for~$i = 1, 2, \dots, n$.
Defining $a := (u-l) / 2 > 0$ and $b:= (l+u)/2$, this constraint is rewritten as $-a \le w - b \le a$, which is equivalent to~\mbox{$-\bm{1} \le D^{-1}(w-b) \le \bm{1}$}, i.e.,~\mbox{$\|D^{-1}(w-b)\|_{\infty} \le 1$}, with $D$ being the $n \times n$ diagonal matrix with diagonal elements $a_1, a_2, \dots, a_n > 0$.
Therefore, with the transformation~\mbox{$x := D^{-1}(w-b) \in S^{n-1}_{\infty}$} and~\mbox{$f(x) := L(a \odot x + b) = L(Dx + b) = L(w)$}, solving Problem~\eqref{prob:box} is essentially equivalent to minimizing $f$ in the unit ball~$B^n_{\infty} = \{x \in \mathbb{R}^n \mid \|x\|_{\infty} \le 1\}$.
Consider a practical case where no minimum point of $f$ over the entire $\mathbb{R}^n$ is in the ball~$B^n_{\infty}$.
Then, as discussed in Section~\ref{subsubsec:relation}, we only have to solve Problem~\eqref{prob:sphere} on the sphere $S^{n-1}_p$ with~$p = \infty$.
However, since $p = \infty$ was excluded from the discussion in the previous sections, we instead need to consider a sufficiently large finite value~$p$ when solving the problem numerically.

We performed a numerical experiment for the following problem with $n = 10$:
\begin{alignat}{2}
&\text{minimize}&\quad & L(w) := \frac{1}{2}w^T A w + c^T w\nonumber\\
&\text{subject to}&\quad & l \le w \le u, \ w \in \mathbb{R}^n,\label{prob:box_example}
\end{alignat}
where the elements of the $n \times n$ symmetric positive definite matrix $A$ and vector~$c \in \mathbb{R}^n$ are randomly generated.
We set $l = (-1, -2, \dots, -10)^T$ and~$u = (1, 2, \dots, 10)^T$.
Note that~\mbox{$\nabla L(w) = Aw + c$} and the minimum point of $L$ over the entire $\mathbb{R}^n$ is $-A^{-1}c$.
We checked that~$w^{\unconst} := -A^{-1}c$ is not feasible for Problem~\eqref{prob:box_example} in this case.
Therefore, as discussed above, if we minimize $f(x) := L(a \odot x +b)$ with $a := (u-l)/2$ and~\mbox{$b := (l+u)/2$} on the sphere~$S^{n-1}_{\infty}$ to obtain $x_*$,
then $w_* := a \odot x_*+b$ is an optimal solution to Problem~\eqref{prob:box_example}.
We approximated~$S^{n-1}_{\infty}$ by $S^{n-1}_p$ with~$p = 5, 10, 50, 100, 500, 1000, 5000, 10000, 50000$, and solved Problem~\eqref{prob:sphere} by the Riemannian conjugate gradient method based on Manopt.
We denote the resultant approximate solution to the original Problem~\eqref{prob:box_example} by $w^{\proposed}_p$ for each $p$ and the solution to Problem~\eqref{prob:box_example} obtained using MATLAB's \texttt{fmincon} function by $w^{\fmincon}$.
The results are shown in Table~\ref{tab:3}.
As expected, the larger the value of $p$, the more accurate is the obtained solution.

\begin{landscape}
\begin{table}
\begin{center}
\caption{Results of solving box-constrained-optimization-related problems. The $i$th row shows the $i$th element of each solution, and the rightmost column shows the distance between the resultant vectors and $w^{\fmincon}$.}\label{tab:3}
\begin{tabular}{lccccccccccc}
\toprule
 & $1$ & $2$ & $3$ & $4$ & $5$ & $6$ & $7$ & $8$ & $9$ & $10$ & $\|w - w^{\fmincon}\|_2$\\
\midrule
$w^{\unconst}$ & $-3.335$ & $4.331$ & $-1.575$ & $-0.383$ & $-1.127$ & $5.731$ & $-3.268$ & $-0.024$ & $2.072$ & $-1.885$ & $5.971$\\
\midrule
$w^{\proposed}_{5}$ & $-0.725$ & $1.912$ & $-0.629$ & $-0.187$ & $-0.693$ & $1.697$ & $-0.863$ & $-0.011$ & $0.500$ & $-0.570$ & $0.4488$\\
$w^{\proposed}_{10}$ & $-0.855$ & $1.954$ & $-0.644$ & $-0.243$ & $-0.728$ & $1.797$ & $-0.931$ & $0.008$ & $0.576$ & $-0.536$ & $0.2357$\\
$w^{\proposed}_{50}$ & $-0.969$ & $1.991$ & $-0.657$ & $-0.294$ & $-0.761$ & $1.882$ & $-0.989$ & $0.026$ & $0.643$ & $-0.503$ & $4.952 \times 10^{-2}$\\
$w^{\proposed}_{100}$ & $-0.985$ & $1.995$ & $-0.658$ & $-0.301$ & $-0.765$ & $1.893$ & $-0.997$ & $0.028$ & $0.652$ & $-0.499$ & $2.493 \times 10^{-2}$\\
$w^{\proposed}_{500}$ & $-0.997$ & $1.999$ & $-0.659$ & $-0.306$ & $-0.769$ & $1.902$ & $-1.003$ & $0.030$ & $0.659$ & $-0.495$ & $5.012 \times 10^{-3}$\\
$w^{\proposed}_{1000}$ & $-0.998$ & $2.000$ & $-0.660$ & $-0.307$ & $-0.769$ & $1.903$ & $-1.004$ & $0.030$ & $0.660$ & $-0.494$ & $2.508 \times 10^{-3}$\\
$w^{\proposed}_{5000}$ & $-1.000$ & $2.000$ & $-0.660$ & $-0.308$ & $-0.770$ & $1.904$ & $-1.004$ & $0.030$ & $0.660$ & $-0.494$ & $5.014 \times 10^{-4}$\\
$w^{\proposed}_{10000}$ & $-1.000$ & $2.000$ & $-0.660$ & $-0.308$ & $-0.770$ & $1.904$ & $-1.004$ & $0.030$ & $0.660$ & $-0.494$ & $2.508 \times 10^{-4}$\\
$w^{\proposed}_{50000}$ & $-1.000$ & $2.000$ & $-0.660$ & $-0.308$ & $-0.770$ & $1.904$ & $-1.004$ & $0.031$ & $0.660$ & $-0.494$ & $5.030 \times 10^{-5}$\\
\midrule
$w^{\fmincon}$ & $-1.000$ & $2.000$ & $-0.660$ & $-0.308$ & $-0.770$ & $1.904$ & $-1.004$ & $0.031$ & $0.660$ & $-0.494$ & $0$
\end{tabular}
\end{center}
\end{table}
\end{landscape}

\section{Concluding remarks}
\label{sec:conclusion}
In this paper, we investigated the geometry of the unit sphere defined via the $p$-norm as $S^{n-1}_p := \{x \in \mathbb{R}^n \mid \|x\|_p = 1\}$ with $p \in [1, \infty]$, especially $p \in (1, \infty)$, in detail.
In particular, we derived formulas for retractions, their inverses, and vector transports, which can be used in Riemannian optimization algorithms.
The results are summarized in Table~\ref{tab} of Section~\ref{sec:summary}.

Furthermore, we discussed two types of applications of optimization on~$S^{n-1}_p$.
The first was for optimization problems on the sphere with the nonnegative constraint, which include the nonnegative PCA problem.
The second was for $L_p$-regularization-related optimization problems, which are closely related to the Lasso regression and box-constrained problems.
To this end, we provided mathematical support for the applications and performed numerical experiments to verify the validity of the theory.

The applications addressed in this paper are examples of the proposed theory, and the corresponding numerical experiments are preliminary ones.
Therefore, developing more efficient algorithms by combining the present theory and existing Riemannian optimization theory than state-of-the-art algorithms for specific problems, e.g., the nonnegative PCA and Lasso problems, are left for future work.

\bibliographystyle{abbrv}
\bibliography{sato_springer}   

\end{document}